\documentclass[10pt]{article}
\usepackage{amssymb,amsthm,amsmath}

\usepackage{graphics}
\usepackage[all,color]{xy}


\pdfpageheight=795pt
\pdfpagewidth=614pt


\textheight=616pt
\textwidth=412pt
\voffset=-10pt
\hoffset=29pt
\footskip=30pt
\headsep=20pt
\headheight=12pt
\topmargin=0pt
\oddsidemargin=0pt
\evensidemargin=0pt
\marginparwidth=0pt
\marginparsep=0pt
\marginparsep=0pt
\marginparpush=0pt


\newtheorem{theorem}{Theorem}[section]

\newtheorem{proposition}[theorem]{Proposition}

\newtheorem{lemma}[theorem]{Lemma}

\newtheorem*{thm::tukeymaxideals}{Theorem~\ref{tukeymaxideals}}
\newtheorem*{thm::fsigma}{Theorem~\ref{fsigma}}
\newtheorem*{thm::omegaturkey}{Theorem~\ref{omegaturkey}}
\newtheorem*{thm::completeP}{Theorem~\ref{completeP}}
\newtheorem*{thm::completeLCP}{Theorem~\ref{completeLCP}}

\theoremstyle{definition}

\newtheorem{definition}[theorem]{Definition}

\theoremstyle{remark}

\newtheorem{claim}[theorem]{Claim}

\newtheorem{question}[theorem]{Question}
\newtheorem*{remark*}{Remark}
\newtheorem*{remarks*}{Remarks}

\def\cZ{\mathcal{Z}}
\def\restrict{\upharpoonright}
\newcommand{\cP}{\mathcal{P}}
\newcommand{\NWD}{\mathrm{NWD}}

\begin{document}

\title{Maximal Tukey types, P-ideals and the weak Rudin-Keisler order}
\author{Konstantinos A. Beros\\
{\small Miami University}
\and 
Paul B. Larson%
\thanks{The research of 
the second author is supported by NSF grants DMS-1201494 and DMS-1764320. Acknowledgment: this preprint has not undergone peer review or any post-submission improvements or corrections. The Version of Record of this article is published in the Archive for Mathematical Logic, and is available online at https://doi.org/10.1007/s00153-023-00897-z.}\\
{\small Miami University}}
\date{}

\maketitle

\begin{abstract}
In this paper, we study some new examples of ideals on $\omega$ with maximal Tukey type (that is, maximal among partial orders of size continuum).  This discussion segues into an examination of a refinement of the Tukey order -- known as the {\em weak Rudin-Keisler order} -- and its structure when restricted to these ideals of maximal Tukey type.  Mirroring a result of Fremlin \cite{fremlin.tukey} on the Tukey order, we also show that there is an analytic P-ideal above all other analytic P-ideals in the weak Rudin-Keisler.
\end{abstract}

\section{Introduction}

A fundamental way of studying partial orders is by examining their cofinal structure, i.e., the structure of their unbounded subsets.  In 1940, John Tukey \cite{tukey} introduced one of the simplest methods of comparing the cofinal structure of different partial orders.

\begin{definition}
If $P$ and $Q$ are partial orders, $P$ is {\em Tukey-reducible} to $Q$ (written $P \leq_{\rm Tukey} Q$) iff there is a map $f : P \rightarrow Q$ such that the $f$-image of an unbounded set in $P$ is unbounded in $Q$.  Two partial orders have the same {\em Tukey type} (or {\em cofinal type}) iff each is Tukey-reducible to the other.
\end{definition}

Tukey himself examined cofinal types in the context of convergence in topological spaces.  It was John Isbell \cite{isbell.cofinal_types} who brought the Tukey order into the realm of combinatorial set theory when he showed that the ideal $\mathcal Z_0$ of asymptotic density zero reals is not Tukey reducible to $\omega^\omega$ (equipped with the eventual domination order).  Subsequent to Isbell's work, Fremlin \cite{fremlin.tukey}, Louveau--Velickovic \cite{louveau-velickovic.ideals-cofinal-types}, Solecki--Todorcevic \cite{solecki-todorcevic.directed,solecki-todorcevic.avoiding} and others have contributed to understanding the Tukey types of those Borel ideals and partial orders which arise naturally in mathematics.  Restricting to some key ideals of interest, the Tukey order looks like
\[
\xy
(0,0)*++{\xy (-10,0)*+{}="init";
(0,0)*+[o]=<20pt>\hbox{${\rm NWD}$}*\frm{}="nwd";
(15,12)*+[o]=<20pt>\hbox{$\ell_1$}*\frm{}="ell";
(15,-12)*+[o]=<20pt>\hbox{$\omega^{\omega}$}*\frm{}="omega";
(30,0)*+[o]=<20pt>\hbox{$\mathcal Z_0$}*\frm{}="zee";
(45,-10)*+{}="lp-1-1";
(45,10)*+{}="lp-1-2";
(40,-15)*+{}="lp-2-1";
(35,-8)*+{}="lp-2-2";
"nwd";"ell"**\dir{-} ?>*\dir2{>};
"omega";"nwd"**\dir{-} ?>*\dir2{>};
"zee";"ell"**\dir{-} ?>*\dir2{>};
"omega";"zee"**\dir{-} ?>*\dir2{>};
\endxy}="transducer";
\endxy
\]
where NWD is the ideal of nowhere dense compact subsets of $2^\omega$ and $\ell_1$ is the ideal of sets $X \subseteq \omega$ such that $\sum_{n \in X} \frac{1}{n+1}$ is finite.  In this diagram, an arrow $P \longrightarrow Q$ indicates the strict Tukey reduction $P <_{\rm Tukey} Q$.

In an extensive paper from 1991, Fremlin \cite{fremlin.tukey} made major advances towards filling in the diagram above.  Of particular interest here, he showed that every Polishable ideal is Tukey-reducible to $\ell_1$.  (An ideal is {\em Polishable} if it admits a Polish topology consistent with its Borel structure and under which its algebraic operations are continuous.)  This particular result of Fremlin illustrates that the Tukey order ignores definable complexity to a certain degree: whereas Polishable ideals are in general $F_{\sigma \delta}$, the ideal $\ell_1$ is merely $F_\sigma$.  There are a variety of ways to strengthen the Tukey order in such a way that both cofinal structure and definable complexity are preserved.  For the purposes of this paper, the ``weak Rudin-Keisler order'' is the most suitable.  This variant on the standard Rudin-Keisler order was defined by the first author in \cite{beros.wrk}.

\begin{definition}
Given ideals $I \subseteq \mathcal P (A)$ and $J \subseteq \mathcal P (B)$, $I$ is {\em weak Rudin-Keisler reducible} to $J$ iff there is an infinite set $B' \subseteq B$ and a map $f: B' \rightarrow A$ such that $X \in I \iff f^{-1} [X] \in J$ for each $X \subseteq A$.  In this case, write $I \leq_{\rm wRK} J$ and call $f$ a {\em weak Rudin-Keisler reduction} (or {\em wRK-reduction}) of $I$ to $J$.
\end{definition}

\begin{remarks*}
For ideals $I$ and $J$,
\[
I \leq_{wRK} J \iff (\exists \mbox{ infinite } X) (I \leq_{\rm RK} J \cap \mathcal P (X))
\]
where $\leq_{RK}$ is the standard Rudin-Keisler order on ideals.  Also, notice that since preimages respect unions,
\[
I \leq_{\rm wRK} J \implies (I , \subseteq) \leq_{\rm Tukey} (J , \subseteq).
\]
Thus, the wRK order is a weakening of the Rudin-Keisler order and a strengthening of the Tukey order.  Furthermore, observe that, if $f: \omega \rightarrow \omega$, the map $X \mapsto f^{-1} [X]$ is continuous on $2^\omega$.  It follows that any wRK reduction is a Wadge reduction and hence preserves definable complexity as well.

It must be noted that the Tukey order also preserves definable complexity in certain cases.  Corollary 5.4 in Solecki--Todorcevic \cite{solecki-todorcevic.directed} implies an excellent example this:  if $I , J \subseteq 2^\omega$ are ideals with $I \leq_{\rm Tukey} J$ and $J$ is analytic, then $I$ is analytic as well.
\end{remarks*}

The present work is divided into two parts.  Sections \ref{tukeymax} and \ref{belownwd} below concern Tukey reductions and ideals of maximal Tukey type.  Sections \ref{wftrees}, \ref{lcpol} and \ref{pideals} below concern wRK-reductions and P-ideals.

Two ideals of interest in the first part of the paper are 
\[
I_{\rm wf} = \{ X \subseteq 2^{<\omega} : (\forall y) (\forall^\infty n) (y \upharpoonright n \notin X)\}
\]
and
\[
I_\omega = \{ X \subseteq 2^{<\omega} : X \mbox{ is a finite union of $\preceq$-antichains}\}
\]
where $\preceq$ is the extension order on binary strings.  More generally, for an additively closed ordinal $\alpha < \omega_1$, let 
\[
I_\alpha = \{ X \subseteq 2^{<\omega} : \mbox{there is an order-preserving } f : (X , \succ) \rightarrow (\alpha , <) \}.
\]
(Section \ref{prelims} below gives a more detailed definition of the $I_\alpha$ and explains the requirement that $\alpha$ be additively closed.  This requirement is related to combinatorial results of Ryan Causey \cite{causey.ellp}.)

From the standpoint of the Tukey order, the most important observation is that all of these ideals are of maximal Tukey type:

\begin{thm::tukeymaxideals}
Every partial order of cardinality continuum (or less) is Tukey-reducible to $I_{\rm wf}$ and all $I_\alpha$ (where $\alpha$ is additively closed).
\end{thm::tukeymaxideals}

Although the existence of ideals of maximal Tukey type is already known (see for instance Louveau--Velickovic \cite{louveau-velickovic.ideals-cofinal-types}), Theorem~\ref{tukeymaxideals} provides some new examples.  The next main result concerning Tukey types establishes a connection between topological and cofinality properties for ideals Tukey-reducible to NWD.

\begin{thm::fsigma}
If $I \subseteq \mathcal P (\omega)$ is an $F_\sigma$ ideal and $I \leq_{\rm Tukey} {\rm NWD}$, then $I$ is countably generated.
\end{thm::fsigma}

It is also worth noting that not all $F_\sigma$ ideals are countably generated.  For instance, the summable ideal $\ell_1$ is not countably generated.

Theorem~\ref{fsigma} is of interest since NWD itself is not countably generated.  Moreover, there are $F_{\sigma \delta}$ ideals on countable sets which are Tukey-below NWD, but not countably generated.  For instance, the ideal
\[
\emptyset \times {\rm Fin} = \{ X \subseteq \omega \times \omega : (\forall m) ( \{ n : (m,n) \in X \} \mbox{ is finite})\}
\]
is Tukey-reducible to NWD.  To see this, let $s_0 , s_1 , \ldots$ be an enumeration of $2^{<\omega}$ and observe that the map
\[
X \mapsto \Big\{ \underbrace{0 {}^\smallfrown \ldots {}^\smallfrown 0}_{m \mbox{ \footnotesize times}} {}^\smallfrown  1 {}^\smallfrown s_i {}^\smallfrown 0 {}^\smallfrown \ldots  : (\exists n) (i \leq n \ \& \  (m,n) \in X) \Big\} \cup \Big\{ 0 {}^\smallfrown 0 {}^\smallfrown \ldots \Big\}
\]
is a Tukey-reduction of $I_1$ to NWD.  On the other hand, $I_1$ is not countably generated since there is no countable dominating family in $\omega^\omega$.


Turning to the wRK order, the next main result of this paper is

\begin{thm::omegaturkey}
If $\alpha$ is a countable ordinal, $I_\omega \leq_{\rm wRK} I_{\omega^{\alpha + 1}}$.
\end{thm::omegaturkey}

As a counterpoint to this theorem, Proposition \ref{nowrk} below provides some non-reduction results between $\ell_1$, $I_{\rm wf}$ and the $I_\alpha$.  In short, all of these ideals are wRK-incomparable.

In the context of the wRK-order, the next two results are analogous to Fremlin's result \cite[Theorem 2B]{fremlin.tukey} that all Polishable ideals are Tukey-reducible to $\ell_1$, i.e., $\ell_1$ is a Tukey-complete Polishable ideal.

\begin{thm::completeLCP}
There exists a wRK-complete locally compact Polishable ideal, i.e., there exists a locally compact Polishable ideal $I_* \subseteq \mathcal P (\omega)$ such that $J \leq_{\rm wRK} I_*$ for each locally compact Polishable ideal $J$.
\end{thm::completeLCP}

Recall that a {\em P-ideal} is an ideal $I$ such that every countable increasing sequence in $I$ has a pseudo-union in $I$.

\begin{thm::completeP}
There exists a wRK-complete analytic P-ideal, i.e., there is an analytic P-ideal $I_{\rm max}$ such that $J \leq_{\rm wRK} I_{\rm max}$ for each analytic P-ideal $J$.
\end{thm::completeP}

In light of Solecki's characterization \cite{solecki.ideals} of analytic P-ideals as Polishable ideals, Theorem~\ref{completeP} is equivalent to the existence of a wRK-complete Polishable ideal.  Details of Solecki's work are discussed in Section \ref{pideals} as his results are crucial to the proof of Theorem~\ref{completeP}.

\section{Preliminaries}\label{prelims}


\subsection{Notation}

This paper uses standard notation for the most part.  For the reader's reference, this section contains some key pieces of notation.  Given a binary string $s$, let $|s|$ denote the length of $s$.  Let $s \preceq t$ indicate that the string $s$ is a prefix of $t$.  If $y$ is a finite or infinite string, let $y \upharpoonright n$ denote the length $n$ initial segment of $y$.  If $s$ and $t$ are strings with no common prefix other than the empty string, write $s \perp t$.

Let $2^{<\omega}$ denote the collection of all binary strings and $2^\omega$ the Cantor space, i.e., the space of all binary sequences with the product topology.

If $F$ is a finite set, let $|F|$ denote the cardinality of $F$.  (This represents a notational overload with the notation $|s|$ above.  Nevertheless, the intended meaning will always be clear from context.)


\subsection{Well-founded trees}

As mentioned in the Introduction, the ideals of interest in the present work consist of sets of binary strings with no infinite $\preceq$-chains.

\begin{definition}
Given $X \subseteq 2^{<\omega}$ (not necessarily closed under taking prefixes), let
\[
[X] = \{ y \in 2^\omega : (\exists^\infty n) (y \upharpoonright n \in X)\}.
\]
As in the Introduction, let $I_{\rm wf}$ be the ideal
\[
\{ X \subseteq 2^{<\omega} : [X] = \emptyset\}.
\]
and note that $I_{\rm wf}$ is a complete $\mathbf \Pi^1_1$ subset of $\mathcal P (2^{<\omega})$.
\end{definition}

\begin{definition}
Suppose $X \subseteq 2^{<\omega}$ with $[X] = \emptyset$.
\begin{itemize}
	\item Define the {\em rank function} $\rho : X \rightarrow \omega_1$ as follows:
	\begin{itemize}
		\item if $u \in X$ and $t \notin X$ for all $t \succ u$, let $\rho (u) = 0$;
		\item otherwise, let $\rho (u) = \sup \{ \rho (t) + 1 : t \in X \mbox{ and } u \prec t\}$.
	\end{itemize}
	
	\item If $\rho : X \rightarrow \omega_1$ is the rank function defined above, define the {\em rank} of $X$ by 
	\[
	{\rm rank} (X , \preceq) = \sup \{ \rho (u) : u \in X \}.
	\]
	Note that ${\rm rank} (X , \preceq) < \omega_1$ since $X$ is countable and $[X] = \emptyset$.
\end{itemize}
\end{definition}

\begin{definition}
For each ordinal $\alpha < \omega_1$, let $I_\alpha$ be the collection of $X \subseteq 2^{< \omega}$ such that ${\rm rank} (X , \preceq) < \alpha$.
\end{definition}

In general, each $I_\alpha$ is closed under subsets, but may not be closed under finite unions, i.e., $I_\alpha$ is not necessarily an ideal.

\begin{proposition}\label{addclosed}
For an ordinal $\alpha < \omega_1$, the set $I_\alpha$ is an ideal iff $\alpha$ is additively closed, i.e., $\beta , \gamma < \alpha \implies \beta + \gamma < \alpha$.
\end{proposition}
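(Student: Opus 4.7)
The plan is to prove both implications of the iff.

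For the $(\Leftarrow)$ direction, I assume $\alpha$ is additively closed and show $I_\alpha$ is closed under binary unions. Additive closure for $\alpha \geq 1$ forces $\alpha = \omega^\delta$; I focus on $\delta \geq 1$ (so $\alpha$ is a limit $\geq \omega$; the degenerate case $\alpha = 1$ is handled by hand). Given $X, Y \in I_\alpha$, extend the rank to all binary strings by
\[
\tilde\rho_X(u) = \sup\{\rho_X(t) + 1 : t \in X,\ u \prec t\}
\]
(empty supremum $= 0$), and analogously for $\tilde\rho_Y$. Let $f(u) = \tilde\rho_X(u) \oplus \tilde\rho_Y(u)$, the Hessenberg natural sum. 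For $u \prec v$ with $v \in X \cup Y$, one of $\tilde\rho_X, \tilde\rho_Y$ strictly decreases from $u$ to $v$ (the one whose underlying set contains $v$), and the other weakly decreases, so strict monotonicity of $\oplus$ yields $f(u) > f(v)$. By minimality of the canonical rank function, $\rho_{X \cup Y}(u) \leq f(u)$. Since $\tilde\rho_X(u) \leq \mathrm{rank}(X) + 1 < \alpha$ and similarly for $\tilde\rho_Y$, and since $\omega^\delta$ is closed under Hessenberg sums of strictly smaller ordinals, $f$ is uniformly bounded by an ordinal strictly below $\alpha$, giving $\mathrm{rank}(X \cup Y) < \alpha$.

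For the $(\Rightarrow)$ direction, I argue contrapositively. Given $\alpha$ not additively closed, pick $\beta, \gamma < \alpha$ with $\beta + \gamma \geq \alpha$ (so $\gamma \geq 1$). I construct $X, Y \in I_\alpha$ of ranks $\beta$ and $\gamma$ whose union is the standard rank-$(\beta + \gamma)$ tree $T_{\beta + \gamma}$, by simultaneously building $T_{\beta + \gamma}$ and a splitting $(X, Y)$ by transfinite recursion on $\gamma$. The successor step $\gamma = \gamma' + 1$ shifts the recursively obtained splitting of $T_{\beta + \gamma'}$ below $\langle 0 \rangle$ and adds the new root $\langle\rangle$ to $Y$. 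The limit step $\gamma = \sup_n \gamma_n$ places the recursively obtained splittings of each $T_{\beta + \gamma_n}$ into disjoint branches indexed by $\langle 1^n 0 \rangle$, again with a common root $\langle\rangle$ in $Y$. Direct rank calculations then give $\mathrm{rank}(X) = \beta$, $\mathrm{rank}(Y) = \gamma$, and $\mathrm{rank}(X \cup Y) = \beta + \gamma \geq \alpha$, so $X \cup Y \notin I_\alpha$ although $X, Y \in I_\alpha$.

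The main obstacle is the limit case of the $(\Rightarrow)$ construction: naively stacking a rank-$\gamma$ tree below a single leaf of a rank-$\beta$ tree fails when $\beta$ is a limit, because the supremum computation at the root absorbs any single local modification. Distributing the rank-$\gamma_n$ pieces across countably many disjoint branches and placing them below a common root in $Y$ is what forces the root of $X \cup Y$ to attain rank $\beta + \gamma$. The other routine ingredients are the closure of $\omega^\delta$ under Hessenberg sums and the existence of standard rank-$\delta$ trees in $2^{<\omega}$ for each $\delta < \omega_1$.
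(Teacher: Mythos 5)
Your argument is correct for $\alpha \geq \omega$ and, in the forward direction, takes a genuinely different route from the paper's. The paper handles $(\Leftarrow)$ by invoking a pigeonhole principle for well-founded trees due to Causey (Lemma~2.4): any $2$-coloring of a set of rank $\geq \alpha$, with $\alpha$ additively closed, admits a monochromatic subset of rank exactly $\alpha$; coloring $X \cup Y$ by membership in $X$ then yields a rank-$\alpha$ subset of $X$ or of $Y$, contradicting $X,Y \in I_\alpha$. You instead bound $\rho_{X \cup Y}$ directly by the Hessenberg natural sum $\tilde\rho_X \oplus \tilde\rho_Y$ of the extended rank functions, using strict monotonicity of $\oplus$ to get an order-reversing surrogate and the closure of $\omega^\delta$ under $\oplus$ to keep it under $\alpha$. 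This is more self-contained (no imported combinatorial lemma) at the cost of importing Hessenberg-sum machinery; the two are roughly a wash, but it is worth knowing both. For $(\Rightarrow)$, you and the paper are after the same decomposition — some rank-$(\beta+\gamma)$ set written as a union of a rank-$\beta$ piece and a rank-$\gamma$ piece — but the paper gets it in one stroke by attaching a copy of a rank-$\beta$ tree below \emph{every} terminal node of a rank-$\gamma$ tree $Y$ (this sidesteps the limit-case issue you flag precisely because the attachment happens at all leaves at once), whereas your construction rebuilds the same object by a fussier transfinite recursion with designated branches. One wrinkle you share with the paper: under the rank convention of Section~2, $I_1$ is exactly the family of $\preceq$-antichains, which is not closed under union (consider $\{0\}$ and $\{00\}$), even though $\alpha=1$ satisfies the displayed additive-closure criterion; Causey's lemma as stated likewise fails at $\alpha=1$. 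So ``$\alpha = 1$ handled by hand'' does not go through — $\alpha = 1$ (and $\alpha = 0$) need to be excluded from the proposition, and the intended scope is $\alpha \geq \omega$.
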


The key to proving this proposition is a combinatorial lemma -- due to Ryan Causey -- which may be extracted by combining Proposition 2.3 (parts ii and iii) and Corollary 3.9 in Causey \cite{causey.ellp}.

\begin{lemma}[R.~Causey]\label{causey-lemma}
Suppose that $\alpha$ is an additively closed ordinal and $A \subseteq 2^{<\omega}$ with ${\rm rank} (A , \preceq) \geq \alpha$.  If $f : A \rightarrow 2$ is any function (i.e., a 2-coloring), there exists $B \subseteq A$ such that ${\rm rank} (B , \preceq) = \alpha$ and $f \upharpoonright B$ is constant.
\end{lemma}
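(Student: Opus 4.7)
The plan is to prove the lemma by transfinite induction on the additively closed ordinal $\alpha$ — specifically, on $\beta$ where $\alpha = \omega^\beta$. (The cases $\alpha \in \{0,1\}$ are trivial or degenerate; I would restrict attention to $\beta \geq 1$, i.e., $\alpha \geq \omega$.) The overall strategy is to show that one of the two color classes $A_c = f^{-1}(c)$ must have rank $\geq \alpha$, and then prune this class down to rank exactly $\alpha$ by a standard recursive construction. One cannot cite Proposition~\ref{addclosed} here, since the paper proves that proposition using the present lemma; the rank analysis has to be done from scratch.

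For the base case $\beta = 1$ ($\alpha = \omega$): since $\mathrm{rank}(A) \geq \omega$, $A$ contains $\prec$-chains of every finite length. For each $n$, I would apply pigeonhole to a chain of length $2n$ in $A$ to produce a monochromatic sub-chain of length $n$ with some color $c_n \in \{0,1\}$; a second pigeonhole on $\{c_n\}_n$ fixes a color $c$ such that $A_c$ contains $\prec$-chains of arbitrarily large finite length, so $\mathrm{rank}(A_c) \geq \omega$.

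For the successor step $\alpha = \omega^{\gamma+1} = \omega^\gamma \cdot \omega$: the central technical claim I would aim for is that $\mathrm{rank}(A) \geq \omega^{\gamma+1}$ forces the existence of pairwise $\perp$-incomparable $s_1, s_2, \ldots \in 2^{<\omega}$ with $\mathrm{rank}(A|_{s_k}) \geq \omega^\gamma$ for each $k$. My plan for establishing this is to consider the derivative $D = \{s \in 2^{<\omega} : \mathrm{rank}(A|_s) \geq \omega^\gamma\}$, which is prefix-closed and (since $A$ is well-founded) is itself a well-founded subset of $2^{<\omega}$, and to argue via ordinal arithmetic that the assumption forces $\mathrm{rank}(D, \preceq) \geq \omega$. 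A well-founded tree of infinite rank must branch infinitely often, yielding the $s_k$. Given these, I would apply the induction hypothesis to each $A|_{s_k}$ to obtain a monochromatic $B_k$ of rank $\omega^\gamma$ and color $c_k$; a pigeonhole on $\{c_k\}_k$ selects infinitely many of a common color $c$, and the union of those $B_k$ is monochromatic of rank $\omega^\gamma \cdot \omega = \omega^{\gamma+1}$ thanks to the pairwise $\perp$-incomparability of the chosen $s_k$.

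For the limit case ($\beta$ limit): fixing a cofinal sequence $\delta_n \nearrow \beta$, I would iterate the successor-step argument to extract pairwise $\perp$-incomparable subtrees of $A$ of ranks $\geq \omega^{\delta_n}$, apply the induction hypothesis to each at level $\omega^{\delta_n}$, and pigeonhole on colors to combine the monochromatic pieces into a set of rank $\sup_n \omega^{\delta_n} = \omega^\beta$. A final routine pruning step would then trim the monochromatic set to rank exactly $\alpha$, not merely $\geq \alpha$. The main obstacle, I expect, will be the successor-step claim about the derivative $D$: proving $\mathrm{rank}(D, \preceq) \geq \omega$ whenever $\mathrm{rank}(A) \geq \omega^\gamma \cdot \omega$ calls for a careful ordinal-arithmetic analysis, and this is precisely the point at which additive closure of $\alpha$ will be indispensable — without it, the rank of $A$ could accumulate in a low-rank part of the derivative in a way that blocks the branching needed for the argument.
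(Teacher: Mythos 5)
First, a point of comparison: the paper does not prove this lemma at all --- it is quoted from Causey \cite{causey.ellp} (by combining Proposition 2.3(ii),(iii) and Corollary 3.9 there), so there is no in-paper argument to measure yours against. Judged on its own terms, your outline contains a fatal error in the successor step, plus an internally inconsistent intermediate claim.

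The fatal error is the rank computation at the end of the successor step. If $s_1, s_2, \ldots$ are pairwise $\perp$-incomparable and each $B_k$ lies entirely above $s_k$, then no element of $B_j$ is $\prec$-comparable to any element of $B_k$ for $j \neq k$, so the rank function of $\bigcup_k B_k$ restricted to $B_k$ coincides with the rank function of $B_k$ alone. Hence ${\rm rank}\bigl(\bigcup_k B_k, \preceq\bigr) = \sup_k {\rm rank}(B_k,\preceq) = \omega^\gamma$, \emph{not} $\omega^\gamma \cdot \omega$. Ranks add only when one set is stacked above the terminal nodes of another (this is exactly the $S {}^\smallfrown T$ operation of Section~\ref{wftrees} and the $\tilde X \cup Y$ construction in the proof of Proposition~\ref{addclosed}); over an antichain of roots they merely take suprema. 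So your successor step delivers a monochromatic set of rank $\omega^\gamma < \alpha$, which does not advance the induction. (Your limit step is immune, since there $\sup_n \omega^{\delta_n} = \omega^\beta$ is genuinely what is wanted.) Repairing the successor case by stacking runs into the obstacle that the intermediate levels $\omega^\gamma \cdot n$ are not additively closed, so the induction hypothesis is unavailable there. The standard fix bypasses the decompose-and-recombine strategy entirely: one proves ${\rm rank}(A_0 \cup A_1,\preceq) < \omega^\beta$ whenever both ${\rm rank}(A_i,\preceq) < \omega^\beta$, by showing $\rho_{A_0 \cup A_1}(u) + 1 \leq (\rho_{A_0}(u)+1) \oplus (\rho_{A_1}(u)+1)$ (natural sum, with the convention that the term is $0$ when $u \notin A_i$) and using that $\omega^\beta$ is closed under $\oplus$. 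This hands you a single color class of rank $\geq \alpha$ for every additively closed $\alpha$ at once, leaving only the (genuinely routine) pruning to rank exactly $\alpha$.

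Separately, your route to the incomparable $s_k$ cannot work as described. The derivative $D = \{ s : {\rm rank}(A|_s,\preceq) \geq \omega^\gamma\}$ is prefix-closed, and a prefix-closed subset of $2^{<\omega}$ with no infinite $\prec$-chain is a finitely branching well-founded tree, hence finite by K\"onig's lemma, hence of finite rank; so ``$D$ is well-founded'' and ``${\rm rank}(D,\preceq)\geq\omega$'' are jointly impossible, and in typical examples $D$ is in fact ill-founded (the well-foundedness of $A$ does not transfer to $D$). The fact you actually need --- that ${\rm rank}(A,\preceq) \geq \omega^{\gamma+1}$ yields infinitely many pairwise incomparable $s$ with ${\rm rank}(A|_s,\preceq) \geq \omega^\gamma$ --- is true, but the argument runs through the set $\{ s : {\rm rank}(A|_s,\preceq) \geq \omega^{\gamma+1}\}$ instead: since ${\rm rank}(A|_s,\preceq) \leq \max\bigl({\rm rank}(A|_{s {}^\smallfrown 0},\preceq), {\rm rank}(A|_{s {}^\smallfrown 1},\preceq)\bigr) + 1$ and $\omega^{\gamma+1}$ is a limit, that set contains an infinite path $y$, and decomposing each $A|_{y \upharpoonright n}$ over the antichain $\{ (y\upharpoonright k) {}^\smallfrown (1 - y(k)) : k \geq n\}$ (plus the finitely many points of $A$ on $y$) forces infinitely many of those off-path nodes to carry rank $\geq \omega^\gamma$. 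Even with that repaired, however, the recombination step fails for the reason given above.
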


This lemma is essentially a pigeonhole principle for well-founded trees.

\begin{proof}[Proof of Proposition \ref{addclosed}]
First of all, assume that $\alpha$ is additively closed.  Towards a contradiction, suppose that $I_\alpha$ is not an ideal.  Let $X , Y \in I_\alpha$ be such that $X \cup Y \notin I_\alpha$.  Since $X \cup Y$ is well-founded, it must be that ${\rm rank} (X \cup Y , \preceq) \geq \alpha$.  Define a 2-coloring on $X \cup Y$ by $f(u) = 0$ if $u \in X$ and $f(u) = 1$ otherwise.

By Lemma~\ref{causey-lemma} above, there is a set $B \subseteq X \cup Y$ such that ${\rm rank} (B , \preceq) = \alpha$ and $f \upharpoonright B$ is constant.  If $f \upharpoonright B \equiv 0$, then ${\rm rank} (X , \preceq) = \alpha$ and, if $f \upharpoonright B \equiv 1$, then ${\rm rank} (Y , \preceq) \geq \alpha$.  In either case, this would contradict the assumption that $X , Y \in I_\alpha$.

Now suppose that $\alpha$ is not additively closed with $\beta , \lambda < \alpha$ such that $\beta + \lambda \geq \alpha$.  Let $X , Y \subseteq 2^{<\omega}$ have no infinite $\preceq$-chains and be such that ${\rm rank} (X , \preceq) = \beta$ and ${\rm rank} (Y , \preceq) = \lambda$.  Let 
\[
\tilde X = \{ t {}^\smallfrown u : t \mbox{ is a terminal node in } Y \mbox{ and } u \in X\}.
\]
It follows that $\tilde X$ also has rank $\beta$.  Furthermore, since the terminals in $Y$ become roots of copies of $X$ in $\tilde X \cup Y$, the terminals of $Y$ have rank $\beta$ in $\tilde X \cup Y$.  Thus, by induction on the rank of nodes in $Y$,
\[
{\rm rank} (\tilde X \cup Y , \preceq) = \beta + \lambda \geq \alpha.
\]
Since $\tilde X , Y \in I_\alpha$, this shows that $I_\alpha$ is not an ideal and completes the proof.
\end{proof}

\section{Tukey-maximal ideals}\label{tukeymax}

The main result of this section (Theorem~\ref{tukeymaxideals}) shows that $I_{\rm wf}$ and the $I_\alpha$ are Tukey-maximal among partial orders of size continuum.

\begin{definition}
A partial order $P$ of size continuum has {\em maximal Tukey type} if every partial order of size continuum or less is Tukey-reducible to $P$.
\end{definition}

The key to proving that a partial order is of maximal Tukey type is establishing that it contains a large ``strongly unbounded'' subset.

\begin{definition}
Suppose $P$ is a partial order.  An infinite subset $X \subseteq P$ is {\em strongly unbounded} iff every infinite subset of $X$ is unbounded in $P$.
\end{definition}

Suppose that $P$ is a partial order which contains a strongly unbounded set $X$ of size continuum.  If $Q$ is any partial order of size continuum and $f : Q \rightarrow X$ is injective, then is a Tukey map from $Q$ into $P$.  In particular, $P$ has maximal Tukey type.  Conversely,

\begin{proposition}
If $P$ has maximal Tukey type, $P$ must contain an uncountable strongly unbounded set.
\end{proposition}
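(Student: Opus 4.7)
The plan is to exploit the maximality hypothesis against a simple partial order of cardinality $\aleph_1$ that visibly contains an uncountable strongly unbounded subset, and then to transport that structure into $P$ via a Tukey reduction. A natural choice is $Q = [\omega_1]^{<\omega}$ ordered by inclusion: the family $A = \{\{\alpha\} : \alpha < \omega_1\}$ of singletons is strongly unbounded in $Q$, since any infinite subfamily of distinct singletons could only be dominated by an infinite subset of $\omega_1$, and no such set belongs to $Q$.

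Since $|Q| = \aleph_1 \leq \mathfrak c$, the maximality assumption yields a Tukey map $f : Q \to P$, and I would set $X = \{f(\{\alpha\}) : \alpha < \omega_1\}$. The first step is to verify that $X$ is uncountable: if not, then by regularity of $\aleph_1$ some fibre $\{\alpha < \omega_1 : f(\{\alpha\}) = p\}$ would be uncountable, so selecting an infinite subfamily of singletons from that fibre produces an unbounded subset of $A$ whose $f$-image is the bounded singleton $\{p\}$, contradicting the defining property of a Tukey map.

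The second step is to show that $X$ is strongly unbounded. Given any infinite $X' \subseteq X$, for each $x \in X'$ I would pick some $\alpha_x < \omega_1$ with $f(\{\alpha_x\}) = x$; these ordinals are pairwise distinct, so the corresponding singletons form an infinite, hence unbounded, subfamily of $A$. Their $f$-image is exactly $X'$, which must therefore be unbounded in $P$.

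The only real subtlety is that Tukey maps need not be injective, so one cannot naively extract $\aleph_1$-many distinct elements of $P$ from the singletons in $A$; the regularity argument in the first step is precisely what circumvents this obstacle, and once $X$ is known to be uncountable, its strong unboundedness is an immediate unpacking of the Tukey property.
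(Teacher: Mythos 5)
Your argument is correct and follows the same two-step skeleton as the paper's proof: first show via a fibre argument that a Tukey map sends a strongly unbounded set to an uncountable set, and then push unboundedness forward through the Tukey map to conclude the image is strongly unbounded. The concrete choice $Q = [\omega_1]^{<\omega}$ cleanly supplies the witness that the paper leaves implicit, though invoking regularity of $\aleph_1$ is more than you need --- if $f[A]$ were countable, ordinary pigeonhole already gives an \emph{infinite} fibre, which is all the contradiction requires.
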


\begin{proof}
Suppose that $P$ is of maximal Tukey type, $Q$ is a partial order of size continuum with an uncountable strongly unbounded subset $X$, and $f : Q \rightarrow P$ is a Tukey map.  

{\em Claim.}  $f[X]$ is uncountable.

Otherwise, there exists $y \in f[X]$ such that $X \cap f^{-1} [ \{ y \}]$ is infinite and hence unbounded in $Q$ -- since $X$ is strongly unbounded.  In particular, $f$ maps an unbounded set to the singleton $\{ y \}$.  This would contradict the assumption that $f$ is a Tukey map.

{\em Claim.}  $f[X]$ is strongly unbounded.

Indeed, suppose that $Y \subseteq f[X]$ is infinite.  It follows that $f^{-1} [Y] \cap X$ is infinite and hence unbounded.  Thus, $Y$ is unbounded since $f$ is a Tukey map.
\end{proof}

The next proposition shows that $\ell_1$ is not of maximal Tukey type.  In particular, no analytic $P$-ideal is of maximal type.

\begin{proposition}
The ideal $\ell_1$ contains no uncountable strongly unbounded set.
\end{proposition}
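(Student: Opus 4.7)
The plan is to show that every uncountable family $\mathcal{X} \subseteq \ell_1$ contains an infinite subfamily whose union lies in $\ell_1$. Writing $s(X) = \sum_{n \in X} \frac{1}{n+1}$ for $X \in \ell_1$, note that this union being in $\ell_1$ is exactly what it means for the subfamily to be bounded above in $(\ell_1, \subseteq)$. As a first reduction, since $\mathcal{X} = \bigcup_{M \in \omega} \{X \in \mathcal{X} : s(X) \leq M\}$, an uncountable sub-family has uniformly bounded weight; thinning $\mathcal{X}$, assume $s(X) \leq M$ for all $X \in \mathcal{X}$.

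The core of the argument is a recursive construction producing uncountable $\mathcal{X} = \mathcal{A}_0 \supseteq \mathcal{A}_1 \supseteq \cdots$, integers $0 = N_0 < N_1 < \cdots$, and finite sets $F_0 \subseteq F_1 \subseteq \cdots$ with $F_k \subseteq [0, N_k)$, such that for every $X \in \mathcal{A}_k$,
\[
X \cap [0, N_k) = F_k \quad \text{and} \quad \sum_{n \in X,\, n \geq N_k} \tfrac{1}{n+1} < 2^{-k}.
\]
Given stage $k$, for each $X \in \mathcal{A}_k$ pick a least threshold $N_X > N_k$ whose tail-sum is below $2^{-(k+1)}$; since $\mathcal{A}_k = \bigcup_{N} \{X : N_X \leq N\}$ is uncountable, pigeonhole yields $N_{k+1}$ and an uncountable $\mathcal{A}_k' \subseteq \mathcal{A}_k$ on which $N_X \leq N_{k+1}$. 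Then $X \cap [N_k, N_{k+1})$ ranges over the finite power set of $[N_k, N_{k+1})$, so a second pigeonhole gives an uncountable $\mathcal{A}_{k+1} \subseteq \mathcal{A}_k'$ on which this intersection is a fixed set $G$; set $F_{k+1} = F_k \cup G$.

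Finally, since each $\mathcal{A}_k$ is uncountable, we can select distinct $X_0, X_1, X_2, \ldots$ with $X_k \in \mathcal{A}_k$. Setting $F_\infty = \bigcup_k F_k$, the containment $F_k \subseteq X_k$ and the uniform bound $s(X_k) \leq M$ give $s(F_\infty) = \sup_k s(F_k) \leq M$. Because $X_k \cap [0, N_k) = F_k \subseteq F_\infty$,
\[
\bigcup_k X_k \;\subseteq\; F_\infty \;\cup\; \bigcup_k \bigl(X_k \cap [N_k, \infty)\bigr),
\]
and its $s$-value is at most $M + \sum_k 2^{-k} = M + 2 < \infty$, so $\bigcup_k X_k \in \ell_1$ witnesses that $\{X_k\}$ is bounded in $\ell_1$, contradicting strong unboundedness. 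The only delicate point is choreographing the two pigeonholes per stage — first on the tail-threshold $N_X$, then on the finite pattern $X \cap [N_k, N_{k+1})$ — while maintaining that all the $\mathcal{A}_k$ remain uncountable; the uniform bound $s(X) \leq M$ from the initial reduction is what later ensures the \emph{accumulated} initial segments $F_\infty$ still lie in $\ell_1$.
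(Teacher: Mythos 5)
This is essentially the paper's argument: both run an iterated pigeonhole to build a decreasing chain of uncountable subfamilies with agreeing finite initial parts and geometrically shrinking tail-sums, and then extract distinct representatives whose union is summable. The one slip is at $k=0$, where your stated invariant $\sum_{n\in X,\, n\ge N_0}\tfrac{1}{n+1}<2^{0}$ would force $s(X)<1$ for every $X\in\mathcal{A}_0$; as in the paper, impose the tail condition only for $k\ge 1$ (this also makes the preliminary thinning to a uniform bound $M$ unnecessary, since $s(F_\infty)$ is already controlled by $s(X_1)$ together with the tail estimates from earlier stages).
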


\begin{proof}
Given $y \in \ell_1$, let $S(y)$ denote the sum $\sum_{n \in y} \frac{1}{n+1}$.  Suppose $X \subseteq \ell_1$ is uncountable.  The first objective is to obtain binary strings $s_k$ and uncountable sets $X_k$ such that the following hold:
\begin{itemize}
	\item $\langle \, \rangle = s_0 \prec s_1 \prec \ldots$,
	\item $X = X_0 \supseteq X_1 \supseteq \ldots$ and
	\item if $k \geq 1$ and $y \in X_k$, then $s_1 \prec y$ and $S(y \setminus |s_k|) \leq 2^{-k}$.
\end{itemize}
To accomplish this, suppose inductively that $s_0 \prec \ldots \prec s_k$ and $X_0 \supseteq \ldots \supseteq X_k$ are given as above.  For each $s \succ s_k$, let
\[
Z_s = \{ y \in X_k : s \prec y \mbox{ and } S(y \setminus |s|) < 2^{-k-1}\}.
\]
Since $X_k$ is uncountable and the $Z_s$ partition $X_k$, there must be $s \succ s_k$ such that $Z_s$ is uncountable and $X_k \setminus Z_s \neq \emptyset$.  Let $s_{k+1} = s$ and $Z_s = X_{k+1}$.

To complete the proof, choose $y_k \in X_k \setminus X_{k+1}$ for each $k \geq 1$.  Thus, since $s_k \prec y_k$ (for $k \in \omega$) and $s_0 \prec s_1 \prec \ldots$,
\[
\bigcup_{k \geq 1} y_k \subseteq \left( \bigcup_{k \geq 1} y_k \cap \Big[|s_{k-1}| , |s_k| \Big) \right) \cup \left( \bigcup_{k \geq 1} y_k \setminus |s_k |\right)
\]
By choice of $s_k$, it follows that $S\left( y_k \cap \big[|s_{k-1}| , |s_k| \big) \right) < 2^{-k+1}$ (for $k \geq 1$) and $S (y_k \setminus |s_k|) < 2^{-k}$.  Therefore $S ( \bigcup_{k \geq 1} y_k) \leq 3$ and hence $\{ y_k : k \in \omega\}$ is an infinite bounded subset of $X$.  In particular, $X$ is not strongly unbounded.
\end{proof}

Recall now the definitions of $I_{\rm wf}$ and the $I_\alpha$ (for additively closed $\alpha$) from Section~\ref{prelims}, as well as the notation $[X]$ (for $X \subseteq 2^{<\omega}$).

\begin{theorem}\label{tukeymaxideals}
Every partial order of cardinality continuum (or less) is Tukey-reducible to $I_{\rm wf}$ and each $I_\alpha$, i.e., $I_{\rm wf}$ and the $I_\alpha$ are of maximal Tukey type.
\end{theorem}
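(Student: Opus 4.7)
The plan is to invoke the criterion established just before the theorem: if $P$ contains a strongly unbounded set of size continuum, then $P$ has maximal Tukey type. So it suffices to exhibit a single family $\{X_y : y \in 2^\omega\}$ of size continuum which is strongly unbounded simultaneously in $I_{\rm wf}$ and in every $I_\alpha$ (with $\alpha$ additively closed). Being strongly unbounded in these ideals just amounts to the assertion that the union of any infinite subfamily fails to be well-founded, since every $I_\alpha \subseteq I_{\rm wf}$ and, in either ideal, a bounded family has its union contained in (and thus inheriting well-foundedness from) an upper bound.

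For each $y \in 2^\omega$, I would set
\[
X_y \;=\; \{\, y \upharpoonright n \,{}^\smallfrown\, (1 - y(n)) \,:\, n \in \omega \,\},
\]
the collection of ``off-branch'' nodes along $y$. Any two distinct elements of $X_y$ branch off $y$ at different levels and are $\preceq$-incompatible, so $X_y$ is a $\preceq$-antichain. In particular ${\rm rank}(X_y , \preceq) = 0$, which gives $X_y \in I_{\rm wf}$ and $X_y \in I_\alpha$ for every nonzero additively closed $\alpha$. The map $y \mapsto X_y$ is injective (from any string $y \upharpoonright n \,{}^\smallfrown\, (1 - y(n)) \in X_y$ one recovers $y \upharpoonright n$ together with $y(n)$), so the family has cardinality continuum.

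The heart of the argument is verifying strong unboundedness. Given an infinite $Y \subseteq 2^\omega$, I need a sequence $z \in 2^\omega$ with $z \upharpoonright m \in \bigcup_{y \in Y} X_y$ for infinitely many $m$. Here I use compactness of $2^\omega$ to choose an accumulation point $z$ of $Y$. For each $n$, pick some $y_n \in Y \setminus \{z\}$ with $y_n \upharpoonright n = z \upharpoonright n$, and let $m_n \geq n$ be the first coordinate at which $y_n$ and $z$ disagree. Then
\[
z \upharpoonright (m_n + 1) \;=\; y_n \upharpoonright m_n \,{}^\smallfrown\, z(m_n) \;=\; y_n \upharpoonright m_n \,{}^\smallfrown\, (1 - y_n(m_n)) \;\in\; X_{y_n}.
\]
Since $m_n \to \infty$, infinitely many initial segments of $z$ lie in $\bigcup_{y \in Y} X_y$, so this union is not well-founded and hence belongs to neither $I_{\rm wf}$ nor any $I_\alpha$. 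Therefore $\{X_y : y \in Y\}$ is unbounded in each of these ideals, completing the verification.

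The only real conceptual obstacle is hitting upon the right encoding $y \mapsto X_y$: once the ``off-branch'' antichain is in hand, compactness of $2^\omega$ produces the accumulation point automatically and the same family witnesses maximality uniformly for $I_{\rm wf}$ and all $I_\alpha$; no separate construction for different ordinals $\alpha$ is needed.
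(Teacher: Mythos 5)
Your proposal is correct and follows essentially the same route as the paper: the sets $X_y$ you define (the ``off-branch'' antichain along $y$) are exactly the paper's $X_y = \{s : s \nprec y \text{ but } s\upharpoonright(|s|-1) \prec y\}$, and your accumulation-point argument for strong unboundedness is the same compactness argument the paper phrases as passing to a convergent subsequence $y_k \to y$. The only cosmetic difference is that the paper also notes $y \mapsto X_y$ is continuous, giving a perfect strongly unbounded set rather than merely one of size continuum.
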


\begin{proof}
It suffices to show that there are strongly unbounded subsets of cardinality continuum in the ideals $I_{\rm wf}$ and $I_\alpha$.  In fact, there is a perfect subset of $2^{<\omega}$ which is strongly unbounded in both $I_{\rm wf}$ and the $I_\alpha$.  Specifically, given $y \in 2^\omega$, let 
\[
X_y = \{ s \in 2^{<\omega} : s \nprec y \mbox{, but } s \upharpoonright (|s| - 1) \prec y\}.
\]
As a $\preceq$-antichain, each $X_y$ is in $I_\omega$ and hence is $I_{\rm wf}$ and the other $I_\alpha$.  The map
\[
y \mapsto X_y
\]
is continuous and therefore 
\[
P = \{ X_y : y \in 2^\omega \}
\]
is perfect.  To see that $P$ is strongly unbounded, suppose that $y_0 , y_1 , \ldots  \in 2^\omega$ are distinct.  The goal is to show that $\{ X_{y_k} : k \in \omega\}$ is unbounded.  Passing to a subsequence, assume that $y_k \rightarrow y$ as $k \rightarrow \infty$ for some $y$ different from all the $y_k$.  For each $k$, let $n_k \in \omega$ be largest such that $y_k \upharpoonright n_k = y \upharpoonright n_k$.  It follows that $y \upharpoonright (n_k + 1) \in X_{y_k}$ for each $k \in \omega$.  In particular, 
\[
y \in \left[ \bigcup \{  X_{y_k} : k \in \omega\} \right].
\]
In other words, $\{ X_{y_k} : k \in \omega\}$ is unbounded in $I_{\rm wf}$ and hence in the other ideals as well.  It follows that $P$ is strongly unbounded.  This completes the proof.
\end{proof}

\section{Ideals below NWD}\label{belownwd}

This section explores the relationship between NWD and countably generated $F_\sigma$ ideals.  Specifically, all $F_\sigma$ ideals which are Tukey-below NWD must be countably generated (Theorem~\ref{fsigma}).

\begin{definition}
If $I$ is an ideal, a set $S \subseteq I$ is {\em $\sigma$-bounded} iff there is a countable set $C \subseteq I$ such every $X \in S$ is contained in some $Y \in C$.  If $I$ itself is $\sigma$-bounded, $I$ is called {\em countably generated}.
\end{definition}

\begin{theorem}\label{fsigma}
If $I$ is an $F_\sigma$ ideal and $I \leq_{\rm Tukey} {\rm NWD}$, then $I$ is countably generated.
\end{theorem}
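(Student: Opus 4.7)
The strategy is to use Mazur's characterization of $F_\sigma$ ideals together with the Tukey connection to produce a countable generating family for $I$. By Mazur's theorem, one can write $I = \bigcup_{n} F_n$ with $F_n = \{X \subseteq \omega : \phi(X) \leq n\}$ for some lower semicontinuous submeasure $\phi$; each $F_n$ is a \emph{compact} hereditary subset of $2^\omega$. Since $I = \bigcup_n F_n$, it suffices to produce, for each $n$, a countable family $\{Y_{n,m}\}_{m \in \omega} \subseteq I$ with $F_n \subseteq \bigcup_{m} \mathcal P(Y_{n,m})$; collecting over all $n, m$ then yields a countable generating family for $I$.

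From $I \leq_{\rm Tukey} {\rm NWD}$ and standard facts about Tukey reductions (appealing, if necessary, to Corollary 5.4 of Solecki--Todorcevic cited in the introduction to regularize the maps), I would fix a monotone Tukey map $f : I \to {\rm NWD}$ together with a monotone cofinal map $g : {\rm NWD} \to I$ forming a Galois--Tukey connection: $f(X) \subseteq A \Rightarrow X \subseteq g(A)$. Setting $Y_{n,m} := g(A_{n,m})$, the problem then reduces to exhibiting countably many $A_{n,m} \in {\rm NWD}$ such that every $X \in F_n$ satisfies $f(X) \subseteq A_{n,m}$ for some $m$, since the Tukey connection would then give $X \subseteq g(A_{n,m}) = Y_{n,m}$.

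To produce such a family, I would fix an enumeration $\{s_k\}_{k \in \omega}$ of $2^{<\omega}$ and exploit the nowhere-density of each individual $f(X)$: for each $X \in F_n$ and each $k$, there is an extension $t \succeq s_k$ with $[t] \cap f(X) = \emptyset$. Using the compactness of $F_n$, the hereditary structure of $F_n$, and the monotonicity (plus regularity) of $f$, I would decompose $F_n$ into countably many closed pieces on each of which a uniform choice of avoidance pattern $(t_k)_{k \in \omega}$ is possible, producing countably many dense antichains $(t_k^{(m)})_{k}$ in $2^{<\omega}$; the sets $A_{n,m} := 2^\omega \setminus \bigcup_k [t_k^{(m)}]$ then form the desired family in ${\rm NWD}$. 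The technical heart of the argument, and the main obstacle I anticipate, is this decomposition step: the interplay between the compactness of $F_n$ and the nowhere-density of the individual images $f(X)$ must be shown to admit only countably many essentially distinct avoidance patterns, despite $F_n$ being potentially uncountable. The intuition is that compactness of $F_n$ (together with the $F_\sigma$ structure of $I$ via $\phi$) prevents the existence of too many independent avoidance patterns, and this should interact tightly with the hereditary structure to force $\sigma$-boundedness of $f[F_n]$ in ${\rm NWD}$.
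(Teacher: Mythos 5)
Your overall framing is attractive but the argument has a genuine gap at precisely the point you flag as the ``technical heart,'' and that gap is not a matter of bookkeeping: it is the entire theorem in disguise. Reducing to the claim that, for each $n$, $f[F_n]$ is $\sigma$-bounded in $\mathrm{NWD}$ (equivalently, that each compact hereditary piece $F_n$ is $\sigma$-bounded in $I$, via the Galois--Tukey pair) is circular in the following sense: if every $F_n$ is $\sigma$-bounded then $I=\bigcup_n F_n$ is $\sigma$-bounded by a trivial union argument, so ``each $F_n$ is $\sigma$-bounded'' \emph{is} the conclusion, restated. Your proposed decomposition step, in which ``only countably many essentially distinct avoidance patterns'' suffice to cover $F_n$, is likewise a literal rewording of $\sigma$-boundedness of $f[F_n]$: a single avoidance pattern $(t_k)_k$ pins $f(X)$ inside the single nowhere dense set $2^\omega\setminus\bigcup_k[t_k]$, and asking for countably many such patterns to exhaust $F_n$ is asking for a countable cover of $f[F_n]$ by principal downsets of $\mathrm{NWD}$. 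Nothing in the compactness of $F_n$ by itself rules out uncountably many ``independent'' avoidance patterns; you must bring the Tukey hypothesis to bear, and you have not done so.

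The paper's proof does bring it to bear, by arguing in the contrapositive and never invoking a cofinal map $g$ or any regularity of the Tukey witness at all. Assume $I$ is not countably generated and let $\pi\colon I\to\mathrm{NWD}$ be \emph{arbitrary}. One builds a decreasing chain $Z_0\supseteq Z_1\supseteq\cdots$ of non-$\sigma$-bounded subsets of $I$ together with strings $s_k$, each $s_k$ extending the $k$-th string in a fixed enumeration of $2^{<\omega}$, so that $\pi(x)\cap N(s_j)=\emptyset$ for all $x\in Z_k$ and $j\le k$. At each step this uses only that the sets $\{x\in Z_k:\pi(x)\cap N(s)=\emptyset\}$ (for $s$ ranging over extensions of the next string $t_{k+1}$) cover $Z_k$ because each $\pi(x)$ is nowhere dense, together with the simple fact that a countable union of $\sigma$-bounded sets is $\sigma$-bounded (Lemma~\ref{L1}). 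The second ingredient is where the $F_\sigma$ structure is used, and it is used quite differently from how you propose: Lemma~\ref{L3} shows that if $I=\bigcup_k F_k$ with $F_k$ closed and hereditary, and $Z_k\subseteq I$ is unbounded, then $\bigcup Z_k\notin F_k$, so a finite $H_k\subseteq Z_k$ already has $\bigcup H_k$ agreeing with $\bigcup Z_k$ long enough to separate it from $F_k$ by a clopen neighborhood; the set $\bigcup_k\bigcup H_k$ then escapes every $F_k$ and is unbounded. Finally $\bigcup\{\pi(x):x\in H_k,\,k\in\omega\}$ is nowhere dense because below each $s_n$ it coincides with a finite union of nowhere dense sets. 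Thus one gets an unbounded subset of $I$ whose image under $\pi$ is bounded, so $\pi$ is not Tukey. The moral, and the concrete thing missing from your plan, is that the $F_\sigma$ structure is exploited through finite approximation against the closed pieces rather than through compactness of the images, and the argument must be organized so that the Tukey hypothesis is actually falsified, not presupposed.
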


\begin{lemma}\label{L1}
If $I$ is an ideal and $S = \bigcup_n Z_n$ is a subset of $I$ which is not $\sigma$-bounded, there is an $n$ such that $Z_n$ is also not $\sigma$-bounded.
\end{lemma}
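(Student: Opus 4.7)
The plan is to prove the contrapositive: if every $Z_n$ is $\sigma$-bounded, then so is $S = \bigcup_n Z_n$. This reduces the lemma to a routine closure-under-countable-unions argument, since the definition of $\sigma$-bounded only requires the existence of a countable majorizing family inside the ambient ideal.

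Concretely, I assume that for each $n \in \omega$ there is a countable set $C_n \subseteq I$ such that every $X \in Z_n$ is contained in some $Y \in C_n$. I then set $C = \bigcup_n C_n$. This $C$ is a countable union of countable sets, hence countable, and $C \subseteq I$ because each $C_n$ is. Given any $X \in S$, pick $n$ with $X \in Z_n$ and then $Y \in C_n \subseteq C$ with $X \subseteq Y$; this witnesses that $S$ is $\sigma$-bounded. Taking the contrapositive gives the lemma.

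The only ``obstacle'' worth flagging is bookkeeping: one must remember that the witnessing family in the definition of $\sigma$-bounded is required to sit inside $I$, so that the amalgamated family $C = \bigcup_n C_n$ is still a legitimate witness. Once this is noted, the argument is immediate and uses no structural property of $I$ beyond being an ideal (in fact it uses no property at all beyond the definition of $\sigma$-bounded).
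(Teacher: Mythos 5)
Your proof is correct and is essentially the same argument the paper gives: the paper also argues by contradiction/contraposition, takes witnessing countable families $C_n \subseteq I$ for each $Z_n$, and observes that $C = \bigcup_n C_n$ witnesses the $\sigma$-boundedness of $S$. Nothing to add.
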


\begin{proof}
Were this not the case, let $C_n \subseteq I$ witness that each $Z_n$ is $\sigma$-bounded.  The set $C = \bigcup_n C_n$ shows that $S$ is $\sigma$-bounded, a contradiction.
\end{proof}

%

In what follows, let $N(s)$ ($s \in 2^{<\omega}$) denote the basic clopen neighborhood
\[
N (s) = \{ y \in 2^\omega : s \prec y\}
\]
in the Cantor space.

\begin{lemma}\label{L3}
If $I$ is an $F_\sigma$ ideal and $Z_k \subseteq I$ ($k \in \omega$) are unbounded, there are finite sets $H_k \subseteq Z_k$ such that $\bigcup_k H_k$ is unbounded.
\end{lemma}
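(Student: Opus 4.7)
The plan is to exploit the closed, hereditary, layered structure of an $F_\sigma$ ideal. First I would write $I = \bigcup_n C_n$ with each $C_n$ closed and, without loss of generality, $C_n \subseteq C_{n+1}$. Then I would pass to the hereditary hulls
\[
F_n = \{ Y \subseteq \omega : (\exists X \in C_n)\, Y \subseteq X \}.
\]
Each $F_n$ is closed in $2^\omega$, since it is the projection onto the second coordinate of the closed set $\{(X,Y) \in C_n \times 2^\omega : Y \subseteq X\}$ inside the compact space $2^\omega \times 2^\omega$. Moreover, $F_n$ is closed under subsets, the sequence is still increasing, and $I = \bigcup_n F_n$. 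This is the only structural input I will use.

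Now fix $k$ and let $W_k = \bigcup Z_k$. Since $Z_k$ is unbounded in the hereditary ideal $I$, necessarily $W_k \notin I$, so $W_k \notin F_k$. Because $F_k$ is closed, $W_k$ has a basic open neighborhood
\[
U = \{ Y \subseteq \omega : A \subseteq Y, \ B \cap Y = \emptyset \}
\]
disjoint from $F_k$, where $A \subseteq W_k$ and $B \cap W_k = \emptyset$ are finite. Note that $A \in U$ (since $A \subseteq W_k$ forces $B \cap A = \emptyset$), so $A \notin F_k$. Since $A$ is a finite subset of $W_k = \bigcup Z_k$, I can choose, for each $x \in A$, some $X_x \in Z_k$ with $x \in X_x$, and take $H_k = \{X_x : x \in A\}$. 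Then $H_k \subseteq Z_k$ is finite, $A \subseteq \bigcup H_k$, and by hereditariness of $F_k$, $\bigcup H_k \notin F_k$.

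Finally I would verify that the family $\bigcup_k H_k \subseteq I$ is unbounded. If it were bounded by some $Y \in I$, then $Y \in F_n$ for some $n$, and for every $k \geq n$ one would have $\bigcup H_k \subseteq Y \in F_n \subseteq F_k$, so $\bigcup H_k \in F_k$ by hereditariness, contradicting the previous paragraph. This completes the argument.

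The main obstacle is the first paragraph: one needs $I$ to be exhausted by closed sets that are simultaneously hereditary, so that closedness supplies finite witnesses to $W_k \notin F_k$ and hereditariness lets those witnesses be absorbed into finite unions drawn from $Z_k$. Just being $F_\sigma$ is not immediately enough, but either the hereditary-closure construction above or Mazur's lower-semicontinuous submeasure representation delivers exactly this. Once that setup is in place, the diagonalization $n = k$ together with hereditariness is entirely mechanical.
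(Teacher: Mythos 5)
Your proof is correct and tracks the paper's argument closely: both decompose $I$ into a chain of closed, downward-closed sets $F_k$, use closedness of $F_k$ to extract a finite positive witness to $\bigcup Z_k \notin F_k$, absorb that witness into a finite union $\bigcup H_k$ drawn from $Z_k$ so that hereditariness yields $\bigcup H_k \notin F_k$, and then diagonalize. The differences are presentational only --- you build the hereditary hulls explicitly via projections where the paper simply cites that the downward closure of a compact set is compact, and you phrase the basic neighborhood via a pair $(A,B)$ rather than via a string $s$ and $N(s)$, which lets you discard the negative constraints from the bookkeeping.
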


\begin{proof}
Fix an $F_\sigma$ ideal $I$ and let $F_0 , F_1 , \ldots \subseteq 2^\omega$ be such that $I = \bigcup_k F_k$ and each $F_k$ is topologically closed and closed under taking subsets.  This is possible since the downward closure of a compact set is still compact.

Suppose now that $Z_0 , Z_1 , \ldots$ are unbounded subsets of $I$.  Hence, for each $k \in \omega$, the set $\bigcup Z_k$ is in none of the $F_n$ and, in particular, $\bigcup Z_k \notin F_k$.  Thus, there are $p_k \in \omega$ such that
\[
 N \left( \left(\bigcup Z_k\right) \upharpoonright p_k \right) \cap F_k = \emptyset
\]
for each $k \in \omega$ since the $F_k$ are closed sets.  Let $n_k \in \omega$ and $x^k_0 , \ldots , x^k_{n_k} \in Z_k$ be such that 
\[
\left(x^k_0 \cup \ldots \cup x^k_{n_k}\right) \upharpoonright p_k = \left( \bigcup Z_k \right) \upharpoonright p_k.
\]
and hence
\[
 N \left( \left( x^k_0 \cup \ldots \cup x^k_{n_k} \right) \upharpoonright p_k \right) \cap F_k = \emptyset
\]
Let $H_k = \{ x^k_0 , \ldots , x^k_{n_k}\}$.  To see that $\bigcup_k H_k$ is unbounded, suppose that, on the contrary, 
\[
x = \bigcup_k \bigcup H_k \in I.
\]
Say $x \in F_i$ and hence $ N (x \upharpoonright p) \cap F_i \neq \emptyset$ for all $p \in \omega$.  Thus, since 
\[
x \supseteq x^i_0 \cup \ldots \cup x^i_{n_i}
\]
and $F_i$ is closed under taking subsets, it follows that
\[
 N \left( \left( x^i_0 \cup \ldots \cup x^i_{n_i} \right) \upharpoonright p_i \right) \cap F_i \neq \emptyset.
\]
Given the choice of $x^i_0 , \ldots , x^i_{n_i}$, this is a contradiction, completing the proof.
\end{proof}

It is now possible to prove the main result of this section.

\begin{proof}[Proof of Theorem~\ref{fsigma}]
Suppose that $I$ is an $F_\sigma$ ideal which is not countably generated and let $\pi : I \rightarrow {\rm NWD}$ be any map.  The objective is to show that $\pi$ is not Tukey, i.e., there is an unbounded set in $I$ whose image under $\pi$ is bounded in NWD.

Fix an enumeration $t_0 , t_1 , \ldots $ of $2^{<\omega}$.  The first step is to obtain inductively $s_k \in 2^{<\omega}$ and $Z_k \subseteq I$ such that
\begin{itemize}
	\item $Z_0 \supseteq Z_1 \supseteq \ldots$,
	\item $Z_k$ is not $\sigma$-bounded,
	\item $s_k \succeq t_k$ and,
	\item for each $x \in Z_k$, the intersection
	\[
	\pi (x) \cap \left( N( s_0 ) \cup \ldots \cup N( s_k ) \right)
	\]
	is empty.
\end{itemize}
To begin the induction, let
\[
Z_{0,s} = \{ x \in I : \pi (x) \cap N( s ) = \emptyset\}.
\]
for each $s \succeq t_0$.  Since the $\pi (x)$ (for $x \in I$) are all nowhere dense, the $Z_{0 , s}$ ($s \succeq t_0$) must cover $I$.  Since $I$ is not countably generated, it follows from Lemma \ref{L1} that there is some $s \succeq t_0$ such that $Z_{0 , s}$ is not $\sigma$-bounded.  Let $Z_0 = Z_{0 , s}$ and $s_0 = s$.  

To complete the induction, suppose that $Z_0 , \ldots , Z_k$ and $s_0 , \ldots , s_k$ are given satisfying the properties above.  To produce $Z_{k+1}$, let
\[
Z_{k+1 , s} = \{ x \in Z_k : \pi (x) \cap N( s ) = \emptyset \}
\]
for each $s \succeq t_{k+1}$.  The $Z_{k+1 , s}$ cover $Z_k$ and by Lemma \ref{L1} applied to $Z_k$, there is a $s \succeq t_{k+1}$ such that $Z_{k+1 , s}$ is not $\sigma$-bounded.  Let $Z_{k+1} = Z_{k+1 , s}$ and $s_{k+1} = s$.

This completes the construction of the $Z_k$.

Use Lemma \ref{L3} and the fact that non-$\sigma$-bounded sets are always unbounded to choose finite $H_k \subseteq Z_k$ such that such that $\bigcup_k H_k$ is unbounded in $I$.  The following claim now suffices to show that $\pi$ is not a Tukey map.

{\em Claim.}  The set $K = \bigcup \{ \pi (x) : x \in H_k \ \& \ k \in \omega\}$ is nowhere dense in $2^\omega$.

Indeed, fix a binary string $t \in 2^{<\omega}$, say $t = t_n$.  By choice of the $Z_k$ and since $H_k \subseteq Z_k$, it follows that $\pi (x) \cap N( s_n ) = \emptyset$ for each $x \in H_k$ with $k \geq n$.  In other words, 
\[
K \cap N( s_n ) = \left(\bigcup \{ \pi (X) : X \in H_k \ \& \ k < n\}\right) \cap N(s_n).
\]
As a finite union of nowhere dense sets, the set above is nowhere dense.  Hence, there is a string $s \succeq s_n \succeq t$ such that $K \cap N( s ) = \emptyset$.  As $t$ was arbitrary, it follows that $K$ is indeed nowhere dense.  This completes the claim and proof.
\end{proof}

\section{Ideals of well-founded subtrees}\label{wftrees}

This section turns to the weak Rudin-Keisler order and its structure on the ideals of well-founded sets of strings.  There are a number of natural questions which remain open.  These are discussed at the end of the section.  Recall from the introduction that 
\[
I_\alpha = \{ X \subseteq 2^{<\omega} : \mbox{there is an order-preserving } f : (X , \succ) \rightarrow (\alpha , <) \}.
\]
As mentioned above, $I_\alpha$ is an ideal iff $\alpha$ is additively closed.  It is a standard fact that all additively closed ordinals are powers of $\omega$.  With this in mind, the next theorem shows that $I_\omega$ is wRK-minimal among ideals of the form $I_\beta$ where $\beta$ is a power of a successor.

\begin{theorem}\label{omegaturkey}
If $\alpha$ is any countable ordinal, $I_\omega \leq_{\rm wRK} I_{\omega^{\alpha + 1}}$.
\end{theorem}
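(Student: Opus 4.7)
The plan is to construct an explicit wRK-reduction $f\colon B'\to 2^{<\omega}$ by inflating each node $t$ of the target $2^{<\omega}$ into a copy of a fixed well-founded subtree $T\subseteq 2^{<\omega}$ of rank exactly $\omega^\alpha$ (which exists for every countable $\alpha$ and, when $\alpha\ge 1$, automatically has infinitely many terminals). The copies are to be arranged so that the copy for $t\cdot i$ sits grafted above every terminal of the copy for $t$ via the extra bit $i$. Under such an $f$, a target tree $X$ of finite rank $n$ should yield $f^{-1}[X]$ of rank on the order of $\omega^\alpha\cdot(n+1)<\omega^{\alpha+1}$, while any $X$ of rank $\ge\omega$ (or with an infinite chain) will push $f^{-1}[X]$ beyond the threshold $\omega^{\alpha+1}$.

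Concretely, letting $\mathrm{Term}(T)$ denote the terminal nodes of $T$, set
\[
B' = \{ v_0\cdot i_1\cdot v_1\cdot i_2\cdots i_n\cdot v_n\in 2^{<\omega} : n\ge 0,\ v_j\in\mathrm{Term}(T)\text{ for } j<n,\ v_n\in T,\ i_j\in\{0,1\}\}.
\]
Each $s\in B'$ has a unique such decomposition (since terminals of $T$ are $\preceq$-incomparable and no node of $T$ strictly extends a terminal), so $f(s)=i_1 i_2\cdots i_n$ is well-defined. Writing $R_t=f^{-1}(\{t\})$, one has $B'=\bigsqcup_t R_t$, and $R_{t\cdot i}$ is a disjoint union of copies of $T$ attached above the terminals of $R_t$ via the $\cdot i$-extension; an immediate consequence of the uniqueness of parsing is that $s\prec s'$ in $B'$ forces $f(s)\preceq f(s')$. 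For the forward direction of the equivalence $X\in I_\omega\iff f^{-1}[X]\in I_{\omega^{\alpha+1}}$, I would prove by induction on $\rho_X(t)$ that every $s\in R_t\cap f^{-1}[X]$ has $\rho_{f^{-1}[X]}(s)\le\omega^\alpha\cdot(\rho_X(t)+1)$, by splitting the extensions of $s$ in $f^{-1}[X]$ into those staying inside $R_t$ and those crossing through a terminal into some $R_{t'}$ with $t\prec t'$ in $X$, and applying the ordinal identity $\omega^\alpha\cdot k+\omega^\alpha=\omega^\alpha\cdot(k+1)$. Since $\rho_X(t)<\omega$ when $X\in I_\omega$, this gives $\mathrm{rank}(f^{-1}[X])<\omega^{\alpha+1}$. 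For the reverse direction, if $X$ is ill-founded then any infinite chain $t_0\prec t_1\prec\cdots$ in $X$ lifts via the stacked $R_{t_i}$'s to an infinite chain in $f^{-1}[X]$; if instead $X$ is well-founded with rank $\ge\omega$, then for each $k<\omega$ it contains a chain $t_0\prec\cdots\prec t_k$ whose preimage $R_{t_0}\cup\cdots\cup R_{t_k}\subseteq f^{-1}[X]$ has rank exactly $\omega^\alpha\cdot(k+1)$ (by the same graft-rank computation), forcing $\mathrm{rank}(f^{-1}[X])\ge\sup_k\omega^\alpha\cdot(k+1)=\omega^{\alpha+1}$.

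The main obstacle is the rank bookkeeping in the forward direction when consecutive nodes $t,t'\in X$ are non-adjacent in $2^{<\omega}$, i.e., $|t'|-|t|>1$: the route from $R_t$ to $R_{t'}$ in $B'$ then passes through $|t'|-|t|-1$ intermediate $T$-segments whose corresponding $R_{t''}$'s are absent from $f^{-1}[X]$, and one must verify that this virtual detour does not boost the rank beyond the claimed bound. The key observation is that, for any terminal $e$ of $R_t$, the rank of $e$ in $f^{-1}[X]$ is determined by its actual extensions in $f^{-1}[X]$, which form the same rank-$\omega^\alpha$ ``top-of-$T$-copy'' structure whether or not the intermediate $R_{t''}$'s are present; thus each crossing contributes precisely $\omega^\alpha+1$ to the rank of the terminal, and the finite-depth gap is absorbed by the identity $1+\omega^\alpha=\omega^\alpha$ for $\alpha\ge 1$.
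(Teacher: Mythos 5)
Your construction is correct in substance and shares the paper's core idea---inflate each node of the target copy of $2^{<\omega}$ by a fixed well-founded tree $T$ of rank $\omega^{\alpha}$, so that a chain of length $k$ in $X$ pulls back to a stack of rank $\omega^{\alpha}\cdot k$ and a cofinal family of such stacks witnesses rank $\geq\omega^{\alpha+1}$---but your implementation is genuinely different and in a way that shifts where the work lies. The paper uses an auxiliary map $\phi$ with pairwise-incomparable outputs (together with markers $\lambda_{n}$ and blocks $T_{\alpha}^{|t|}$) so that all of the material associated with a given target string $t'$ lives in its own clopen cone; the pre-image of any $X$ then decomposes as a disjoint union over $t'$ of pieces whose ranks are read off directly from $|\{t\in X:t\preceq t'\}|$ via the identity $\mathrm{rank}\bigl(\bigcup_{i\le k}T_{\alpha}^{n_{i}}\bigr)=\omega^{\alpha}\cdot k$, and no rank induction over $X$ is needed. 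Your substitution $R_{t}=f^{-1}(\{t\})$ nests the pre-images the same way the $t$'s are nested, which is more economical (each $t$ has a single $R_{t}$) at the cost of the rank induction $\rho_{f^{-1}[X]}(s)\leq\omega^{\alpha}\cdot(\rho_{X}(t)+1)$ for $s\in R_{t}$; you correctly identify that the grafting steps are absorbed by $1+\omega^{\alpha}=\omega^{\alpha}$, that the ``missing $R_{t''}$'' gaps for non-adjacent $t\prec t'$ in $X$ do not affect the cone above a terminal $e\in R_{t}$ because only the actual elements of $f^{-1}[X]$ enter the rank computation, and that your $T$ has infinitely many terminals (needed for the branching). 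Two minor remarks: first, your identity $1+\omega^{\alpha}=\omega^{\alpha}$ requires $\alpha\geq 1$, so you should dispose of $\alpha=0$ separately (it is the trivial instance $I_{\omega}\leq_{\rm wRK}I_{\omega}$ via the identity map); second, the monotonicity $s\prec s'\implies f(s)\preceq f(s')$, which you derive from uniqueness of parsing, is the key structural fact making the ``crossing'' analysis work, and it would be worth stating more prominently since it is what guarantees that the extensions of a node of $R_{t}$ in $f^{-1}[X]$ lie entirely in $\bigcup\{R_{t'}:t'\in X,\,t'\succeq t\}$.
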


For the sake of the proof of Theorem~\ref{omegaturkey}, some additional notation and observations will be helpful.  
\begin{itemize}
	\item For $s \in 2^{<\omega}$ and a well-founded $T \subseteq 2^{<\omega}$, let 
	\[
	s{}^\smallfrown T = \{ s{}^\smallfrown t : t \in T\}.
	\]
	That is, $s{}^\smallfrown T$ is a copy of $T$ extending the string $s$.
	
	\item For well-founded $S , T \subseteq 2^{<\omega}$, let
	\[
	S {}^\smallfrown T = \{ s {}^\smallfrown t : \mbox{$s$ is a terminal node of $S$ and } t \in T  \}.
	\]
	In other words, $S {}^\smallfrown T$ consists of a copy of $T$ extending each terminal node of $S$.  If $T$ does not contain the empty string, $S \nsubseteq S {}^\smallfrown T$.
	
	\item Let $T^n$ denote the $n$-fold  ``sum'' $T {}^\smallfrown \ldots {}^\smallfrown T$.
	
	\item Observe that, if ${\rm rank} (S, \preceq) = \alpha$ and ${\rm rank} (T , \preceq) = \beta$, then 
	\[
	{\rm rank} (S {}^\smallfrown T , \preceq) = \beta
	\]
	and
	\[
	{\rm rank} (S \cup (S {}^\smallfrown T) , \preceq) = \beta + \alpha.
	\]
	
	
	
	\item Let $T_\alpha$ be a set of strings not containing the empty string and having ${\rm rank} (T_\alpha , \preceq) = \omega^\alpha$.  
	
	\item Observe that, for $n_0 < n_1 < \ldots n_k$, the rank of $\bigcup_{i \leq k} T_\alpha ^{n_i}$ is $\omega^\alpha \cdot k$.
	
	\item Let $\phi : 2^{<\omega} \rightarrow 2^{<\omega}$ be a function such that, if $s , t \in 2^{<\omega}$ and $s \neq t$, then $\phi (s) \perp \phi (t)$.
	
	\item Finally, let $\lambda_n$
\end{itemize}

\begin{proof}[Proof of Theorem ~\ref{omegaturkey}]
The first step is to define the desired wRK map $f$.  Using the notation above, given $s , t \in 2^{<\omega}$, let $f(s) = t$ iff there exists $t' \succeq t$ and $n \geq |t|$ such that
\[
s \in \phi (t') {}^\smallfrown \lambda_n {}^\smallfrown (T_\alpha)^{|t|}.
\]
Notice that, if $ t_1 \preceq \ldots \preceq t_k$ and $t \succeq t_k$, then $f^{-1} [ \{ t_1 , \ldots , t_k\}]$ contains a copy of
\[
\bigcup_{1 \leq i \leq k} (T_\alpha)^{|t_i|}
\]
extending each $\phi (t) {}^\smallfrown \lambda_n$ with $n \geq |t_k|$.  It follows by remarks before the proof that each $\phi (t)$ has rank $\omega^\alpha \cdot k$ in $f^{-1} [ \{ t_1 , \ldots , t_k\}]$.

Thus, if all $\prec$-chains of strings in a set $X \subseteq 2^{< \omega}$ have length at most $k \in \omega$, each $\phi (t)$ has rank at most $\omega^\alpha \cdot k$ in $f^{-1} [ X]$.  On the other hand, if $X \subseteq 2^{\omega}$ has $\prec$-chains of arbitrary length, there are $\phi (t) \in f^{-1} [X]$ in with rank $\omega^\alpha \cdot k$ (in $f^{-1} [X]$) for infinitely many $k$ -- namely those $t$ which are the terminal nodes of $\prec$-chains in $X$.  In particular, ${\rm rank} (f^{-1} [X] , \preceq)$ is at least $\omega^{\alpha + 1}$.
\end{proof}

It turns out that all ideals of the form $I_\alpha$ are wRK-incomparable with $I_{\rm wf}$, even though these ideals all have maximal Tukey type.  This is a consequence of part (c) of the following proposition and the remark following it.  

\begin{proposition}\label{nowrk}  \rule{0ex}{1ex}

{\em a)}  $\ell_1 \nleq_{\rm wRK} I_\omega$

{\em b)}  $\ell_1 \nleq_{\rm wRK} I_{\rm wf}$

{\em c)}  $I_\omega \nleq_{\rm wRK} I_{\rm wf}$
\end{proposition}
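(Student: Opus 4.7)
The plan is to argue each of (a), (b), (c) by contradiction: assume a wRK-reduction $f$ is given, extract combinatorial structure from the hypothesis, and produce a witness $X$ in the source ideal whose preimage $f^{-1}[X]$ escapes the target ideal, violating the defining biconditional.

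For (a), with $f:B'\to\omega$ witnessing $\ell_1\leq_{\rm wRK}I_\omega$, the fact that $\omega\notin\ell_1$ forces $B'\notin I_\omega$, so $B'$ carries a $\prec$-chain $\sigma^n$ of length $n$ for every $n$. Since $f^{-1}[\{0,\dots,K\}]\in I_\omega$ has bounded chain length, at most finitely many elements of any $\sigma^n$ can have $f$-value $\leq K$, forcing $\max_{s\in\sigma^n}f(s)\to\infty$. I would then diagonalise: at stage $k$, pick $n_k$ so that some $k$ elements of $\sigma^{n_k}$ have $f$-values exceeding $2^k$, and add those values to $X$; summability of $\sum_k k\cdot 2^{-k}$ keeps $X$ in $\ell_1$, while $f^{-1}[X]$ inherits chains of every finite length, contradicting $f^{-1}[X]\in I_\omega$. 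For (b), $B'\notin I_{\rm wf}$ yields an infinite chain $s_0\prec s_1\prec\cdots$ in $B'$; each $k\in\omega$ is taken only finitely often as $f(s_i)$ (the fibre is well-founded), so after thinning to $f(s_{i_\ell})\geq 2^\ell$ the sparse set $X=\{f(s_{i_\ell}):\ell\in\omega\}\in\ell_1$ has preimage containing the infinite chain $\{s_{i_\ell}\}$, contradicting $f^{-1}[X]\in I_{\rm wf}$.

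Part (c) is the genuinely harder case. With $f:B'\to 2^{<\omega}$ witnessing $I_\omega\leq_{\rm wRK}I_{\rm wf}$, I pick an infinite chain $C\subseteq B'$ (which exists as $B'\notin I_{\rm wf}$) and apply Ramsey's theorem to $f(C)$; this image is necessarily infinite because each fibre $f^{-1}[\{t\}]$ is well-founded and hence meets the chain $C$ only finitely. If $f(C)$ contains an infinite antichain $A$, then $A\in I_\omega$ yet choosing one preimage in $C$ of each $a\in A$ produces an infinite sub-chain of $C$ inside $f^{-1}[A]$, contradicting $f^{-1}[A]\in I_{\rm wf}$. Otherwise one is in the chain branch of the dichotomy for every infinite chain of $B'$, and a short Ramsey/Dilworth observation (an infinite well-founded remainder or two distinct accumulation points would themselves produce an infinite antichain) shows that each such $f(C)$ has only finitely many accumulation points in $2^\omega$ and differs from the corresponding branch-prefix chains by a finite set.

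To close out the chain branch, I would fix any $X\in I_{\rm wf}\setminus I_\omega$, for instance $X=\bigcup_{n\geq 1}\{1^n 0^k:0\leq k<n\}$, a well-founded set of rank~$\omega$ with $[X]=\emptyset$. Then every $w\in 2^\omega$ has only finitely many prefixes in $X$, so for each $y\in[B']$ the chain $C_y=\{y\upharpoonright n:y\upharpoonright n\in B'\}$ has $f(C_y)\cap X$ finite; fibre-well-foundedness then forces $C_y\cap f^{-1}[X]$ to be finite as well, and since any $y\notin[B']$ contributes only finitely many prefixes to $B'$, we conclude $[f^{-1}[X]]=\emptyset$. Hence $f^{-1}[X]\in I_{\rm wf}$ while $X\notin I_\omega$, contradicting the biconditional. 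The principal obstacle is exactly this chain branch of the Ramsey dichotomy in (c): the naive preimage-of-antichain trick no longer applies, and the argument must exploit the distinction between bounded chain length ($I_\omega$) and mere well-foundedness ($I_{\rm wf}$) by choosing a witness of countable rank~$\omega$ whose preimage is still forced to be well-founded.
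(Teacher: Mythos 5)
Your arguments for (a) and (b) are, in substance, the paper's own. In (a) the paper makes the $\ell_1$ bookkeeping explicit by choosing $k_{n,0},\dots,k_{n,n}$ out of the tail $\{k:\tfrac{1}{k+1}\le\tfrac{1}{2^n(n+1)}\}$ and taking the union; your version phrases the same diagonalisation as grabbing $k$ chain elements with $f$-values exceeding $2^k$. Your (b) matches the paper's proof (pull a branch out of $f^{-1}[\omega]$, use that each $f^{-1}[\{n\}]$ meets the branch finitely, thin to $k_n\ge 2^n$).

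Part (c) is genuinely different. The paper fixes the witness $X=\{1^k0^{i+1}:0\le i\le k\}$, extracts a branch $y\in[f^{-1}[X]]$, and inductively diagonalises along $y$ against the finite truncations $Y_n$ (whose preimages lie in $I_{\rm wf}$) to produce an antichain $Z\subseteq X$ with $y\in[f^{-1}[Z]]$ -- no Ramsey dichotomy appears. Your approach processes chains $C\subseteq B'$ with a dichotomy on $f(C)$ first and resolves the chain branch by a structure theorem for antichain-free subsets of $2^{<\omega}$. That plan is workable, but two steps in your write-up are wrong as stated. First, $X=\bigcup_{n\ge1}\{1^n0^k:0\le k<n\}$ is not well-founded: $k=0$ puts every $1^n$ in $X$, so $1^\omega\in[X]$. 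You want $1\le k\le n$ (excluding the bare $1^n$), i.e.\ exactly the paper's $\{1^k0^{i+1}:0\le i\le k\}$. Second, "two distinct accumulation points would produce an infinite antichain'' is false: $\{0^n:n\ge1\}\cup\{1^n:n\ge1\}$ has two accumulation points but every antichain has size at most $2$. The correct fact is that \emph{infinitely many} accumulation points yield an antichain (if $w_i\to w$ with $w_i\ne w$, the splitting nodes $w_i\restriction(n_i+1)$ are pairwise incomparable and lift to $f(C)$), which is what actually gives you "finitely many accumulation points.''

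With those two repairs your chain branch closes, but there is a shortcut that removes the structure theorem altogether and is worth recording. If $y\in[f^{-1}[X]]$, then $C_y=\{y\restriction n:y\restriction n\in f^{-1}[X]\}$ is an infinite chain with $f(C_y)\subseteq X$. Since $X$ is well-founded, $f(C_y)$ contains no infinite chain; by the chain-branch hypothesis it contains no infinite antichain; by Ramsey $f(C_y)$ is therefore finite. But then $C_y\subseteq\bigcup_{a\in f(C_y)}f^{-1}[\{a\}]$, a finite union of sets in $I_{\rm wf}$ (each singleton $\{a\}$ lies in $I_\omega$), hence itself in $I_{\rm wf}$ -- contradicting that $C_y$ is an infinite chain. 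This dispenses with accumulation-point analysis and shows any $X\in I_{\rm wf}\setminus I_\omega$ works.
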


\begin{remark*}
Since $I_{\rm wf}$ is proper $\mathbf \Pi^1_1$ and $I_\omega$ is $F_\sigma$ it follows that $I_{\rm wf} \nleq_{\rm wRK} I_\omega$ since wRK-maps preserve definable complexity.  Also note that $I_\omega \nleq_{\rm wRK} \ell_1$ since the former is of maximal Tukey type and the latter is not.
\end{remark*}

\begin{proof}[Proof of Theorem \ref{nowrk}]
(a)  On the contrary, suppose that $f $ is a wRK-reduction of $\ell_1$ to $I_\omega$.  For each $n \in \omega$, the interval $I_n = [\frac{1}{2^n \cdot (n+1)} , \infty)$ is not in $\ell_1$.  Hence, $f^{-1} [I_n] \notin I_\omega$.  in light of this, choose $k_{n,0} , \ldots , k_{n,n} \in I_n$ such that 
\[
f^{-1} [ \{ k_{n,0} , \ldots , k_{n,n} ]
\]
has tree rank at least $n$.  Let $X = \{ k_{n,i} : i \leq n\}$.  It follows that $X \in \ell_1$, but $f^{-1} [ X] \notin I_\omega$.  This is a contradiction.

\vspace{1em}

\noindent (b)  Again suppose that $f$ is a wRK-reduction of $\ell_1$ to $I_{\rm wf}$.  Since $\omega \notin \ell_1$, there must be a infinite path $y \in 2^\omega$ in $\left[ f^{-1}[\omega] \right]$.  First note that $f$ is finite-to-one on 
\[
\{ t \in 2^{<\omega} : t \prec y\}.
\]
since each $\{ n \}$ is in $\ell_1$.  Use this fact inductively to pick $k_0 < k_1 < \ldots$ and $t_0 \prec t_1 \prec \ldots \prec y$ such that
\begin{itemize}
	\item $k_n \geq 2^n$ and
	\item $t_n \in f^{-1} [\{ k_n \}]$.
\end{itemize}
This yields a contradiction since $\{ k_n : n \in \omega \} \in \ell_1$, but $y \in \left[ f^{-1} [\{ k_n : n \in \omega\} \right]$, i.e., $f^{-1} [ \{ k_n : n \in \omega \}] \notin I_{\rm wf}$.

\vspace{1em}

\noindent (c) Suppose that the contrary is true.  Say $A \subseteq 2^{<\omega}$ is infinite and $f : A \rightarrow 2^{<\omega}$ is a wRK-reduction of $I_\omega$ to $I_{\rm wf}$.  The objective is to define a $\prec$-antichain $Z \subseteq 2^{<\omega}$ such that $f^{-1} [Z] \notin I_{\rm wf}$.  This will be accomplished by induction.  For $i , k \in \omega$, let
\[
s_{k,i} = 1^k {}^\smallfrown 0^{i+1}
\]
and 
\[
X = \{ s_{k,i} : i \leq k \in \omega\}.
\]
Since $X$ has infinite tree-rank, $X \notin I_\omega$ and hence $f^{-1} [X]$ contains an infinite path.  Say $y \in \left[ f^{-1} [X] \right]$.  To begin the induction, let $k_0 , i_0$ (with $i_0 \leq k_0$) be such that $f^{-1} \left[ \{ s_{k_0 , i_0} \} \right]$ contains a string $t_0 \prec y$.

Suppose now that an increasing sequence $k_0 < \ldots < k_n$, natural numbers $i_0 , \ldots , i_n$  and strings $t_0 \prec \ldots \prec t_n \prec y$ are given such that, for all $p \leq n$,
\[
t_p \in f^{-1} \left[ \{ s_{k_p , i_p} \} \right]
\]
Let
\[
Y_n = \{ s_{k,i} : i \leq k \leq k_n \}
\]
and note that $Y_n$ has rank $k_n$.  In particular, $Y_n \in I_\omega$.  Since $f$ is a wRK-reduction of $I_\omega$ to $I_{\rm wf}$, it follows that $f^{-1} [Y_n] \in I_{\rm wf}$ and hence contains only finitely many $t \prec y$.  Therefore, let $k_{n+1} > k_n$, $i_{n+1}$ and $t_{n+1} \in 2^{<\omega}$ be such that 
\begin{itemize}
	\item $t_n \prec t_{n+1} \prec y$ and
	\item $t_{n+1} \in f^{-1} \left[ \{ s_{k_{n+1} , i_{n+1}} \} \right]$.
\end{itemize}
Since $k_0 < k_1 < \ldots$, it follows that 
\[
Z = \{ s_{k_n , i_n} : n \in \omega\}
\]
is an antichain and hence in $I_\omega$.  On the other hand, $t_n \in f^{-1} [ Z ]$ for all $n$.  Hence, $y \in \left[ f^{-1} [Z] \right]$ and in particular $f^{-1} [Z] \notin I_{\rm wf}$.  This contradicts the assumption that $f$ is a wRK-reduction of $I_\omega$ to $I_{\rm wf}$.
\end{proof}

\subsection*{Some questions}

Some questions are left open by the results of this section.  The most obvious of these is the following:

\begin{question}
Is $I_\omega \leq_{\rm wRK} I_\alpha$ if $\alpha = \omega^\lambda$ for some limit ordinal $\lambda$?
\end{question}

\noindent The proof of Theorem~\ref{omegaturkey} is dependent on the fact that an ordinal of the form $\omega^{\alpha + 1}$ is the limit of ordinals of the form $\beta \cdot n$, for a fixed $\beta < \omega^{\alpha + 1}$.  This is of course not the case for ordinals $\omega^\lambda$ with $\lambda$ a limit.  

A more general question in the spirit of the previous one is

\begin{question}
Given arbitrary additively closed $\alpha  < \beta < \omega_1$, is $I_\alpha \leq_{\rm wRK} I_\beta$?
\end{question}

To motivate the next question, first observe that $\mathcal Z_0$ (the ideal of subsets of $\omega$ with asymptotic density zero) is not wRK-reducible to $I_\omega$ since the  latter is $F_\sigma$ and the former is $F_{\sigma \delta}$.  This leaves the following question.

\begin{question}
Is there some additively closed $\alpha$ such that $\mathcal Z_0 \leq_{\rm wRK} I_\alpha$?  For instance, is $\mathcal Z_0 \leq_{\rm wRK} I_{\omega^2}$?
\end{question}

\section{Locally compact Polishable ideals}\label{lcpol}

This section should be regarded as a ``warm-up'' for the main theorem Section~\ref{pideals}.  The proof of the following theorem gives an idea of the type of coding used in the proof Theorem~\ref{completeP}.

\begin{theorem}\label{completeLCP}
There is a locally compact Polishable ideal $I_*$ such that $J \leq_{\rm wRK} I_*$ for each locally compact Polishable ideal $J$.
\end{theorem}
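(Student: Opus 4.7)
The plan is to build $I_*$ as a countable ``direct sum'' of a dense family of canonical locally compact Polishable ideals, and then show that any locally compact Polishable ideal factors wRK-reducibly into one of the summands.

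First I would invoke Mazur's representation theorem: every $F_\sigma$ ideal $J$ on $\omega$ is of the form ${\rm Fin}(\phi) = \{A \subseteq \omega : \phi(A) < \infty\}$ for some lower semicontinuous submeasure (lscsm) $\phi$ on $\omega$. Since a locally compact Polish group is $\sigma$-compact and the compact subsets of a Polishable ideal are closed in the Cantor topology on $\mathcal{P}(\omega)$, every locally compact Polishable ideal $J$ is $F_\sigma$; hence $J = {\rm Fin}(\phi_J)$ for some lscsm $\phi_J$ whose induced group topology is locally compact. I would enumerate a countable family $\{\phi_n : n \in \omega\}$ of \emph{rational-valued} lscsm on $\omega$ such that each ${\rm Fin}(\phi_n)$ is locally compact Polishable, and such that for every locally compact Polishable ideal $J$ there exist $n \in \omega$ and a bijection $\sigma : \omega \to \omega$ with $\sigma[J] = {\rm Fin}(\phi_n)$. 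The existence of such a rational surrogate is a ``rationalization'' step: given $\phi_J$, one replaces its values on finite sets by a compatible rational approximation while preserving the submeasure axioms and the ideal $\text{Fin}(\phi_J)$.

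Next I would define $I_*$ on $\omega \times \omega$ by
\[
I_* = \{A \subseteq \omega \times \omega : \{n : A_n \neq \emptyset\} \text{ is finite, and } A_n \in {\rm Fin}(\phi_n) \text{ for each } n\},
\]
where $A_n = \{k : (n,k) \in A\}$. I would check that $I_*$ is a Polishable ideal under the natural direct-sum norm $\psi(A) = \sum_n \phi_n(A_n)$ (with the convention that only finitely many summands are nonzero on $I_*$), and that $I_*$ is locally compact: each ball $\{A : \psi(A) \leq r \text{ and } A \subseteq \{0,\ldots,N\} \times \omega\}$ is a finite union of compact balls from the ${\rm Fin}(\phi_n)$ and exhausts a neighborhood of $\emptyset$.

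Finally, for each locally compact Polishable ideal $J$, one chooses $n$ and a bijection $\sigma : \omega \to \omega$ so that $\sigma[J] = {\rm Fin}(\phi_n)$, and defines $f : \{n\} \times \omega \to \omega$ by $f(n,k) = \sigma^{-1}(k)$. Then for $X \subseteq \omega$ one has $f^{-1}[X] = \{n\} \times \sigma[X]$, which lies in $I_*$ iff $\sigma[X] \in {\rm Fin}(\phi_n)$ iff $X \in J$; so $f$ is a wRK-reduction of $J$ to $I_*$. The main obstacle is the rationalization step in paragraph two: naive rounding of a lscsm can violate subadditivity or monotonicity, and one must design the rounding scheme carefully (using inductive definitions on the finite subsets of $\omega$ together with a controlled error) so that the resulting rational $\phi'$ still has ${\rm Fin}(\phi') = {\rm Fin}(\phi)$ and still generates a locally compact Polish topology. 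All other steps are essentially bookkeeping once the rationalization is in hand.
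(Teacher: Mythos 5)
There is a genuine gap, and it is exactly where you flag difficulty: the ``rationalization'' step. You need a fixed countable family $\{\phi_n\}$ of rational-valued lsc submeasures so that \emph{every} locally compact Polishable ideal is carried by a bijection of $\omega$ onto some ${\rm Fin}(\phi_n)$. Rounding a single submeasure $\phi$ to a rational-valued $\phi'$ with ${\rm Fin}(\phi')={\rm Fin}(\phi)$ is a solvable (and standard) problem, but it does not do what you need: a rational-valued lsc submeasure is still an arbitrary function from $[\omega]^{<\omega}$ to $\mathbb Q$, of which there are continuum many, so there is no cardinality reason a countable list should exist. What you actually need is the statement that, \emph{up to a bijection of $\omega$}, there are only countably many (in fact finitely many) locally compact Polishable ideals — and nothing in the Mazur representation or the rounding scheme supplies that. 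That count-reduction is precisely Solecki's Corollary~3.3, which says a locally compact Polishable ideal is exactly one of the form $\{X\subseteq\omega : X\cap A \text{ finite}\}$ for some $A\subseteq\omega$; from this it is immediate that there are at most three isomorphism types. Without that structural fact, your enumeration claim is unsupported, and with it, the direct-sum machinery is unnecessary.

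Once Solecki's characterization is in hand, the paper's construction is strikingly simpler and bypasses submeasures entirely: let $B=\{s{}^\smallfrown 1 : s\in 2^{<\omega}\}$ and $I_*=\{X\subseteq 2^{<\omega} : X\cap B\text{ is finite}\}$. For a given $J=\{X : X\cap A\text{ finite}\}$, take $\alpha\in 2^\omega$ to be the characteristic function of $A$ and let $f(\alpha\restriction n)=n$ on the infinite set $\{\alpha\restriction n : n\in\omega\}$. Then $f^{-1}[X]=\{\alpha\restriction n : n\in X\}$, which meets $B$ exactly along the positions in $A$, so $f^{-1}[X]\in I_*$ iff $X\cap A$ is finite iff $X\in J$. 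In effect, the branching of the binary tree encodes the subset $A$, and one copy of $I_*$ simultaneously contains a wRK-preimage for every locally compact Polishable ideal — no countable direct sum, no rationalization, and no Mazur representation are needed. I would recommend you locate and lean on Solecki's Corollary~3.3 here; it is the result that makes the problem tractable, and it also eliminates the subsidiary worry you would otherwise face about whether each ${\rm Fin}(\phi_n)$ and their direct sum are actually locally compact.
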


In other words, there is a wRK-complete locally compact Polishable ideal.

Solecki \cite[Corollary 3.3]{solecki.ideals} provides a characterization of locally compact Polishable ideals which is crucial to the proof of Theorem~\ref{completeLCP}: an ideal $I \subseteq \omega$ is locally compact Polishable iff there is a set $A \subseteq \omega$ such that 
\[
I = \{ X \subseteq \omega : X \cap A \mbox{ is finite}\}.
\]
The ``if'' part of Solecki's result is the difficult part.  The ``only if'' part follows by observing that
\[
d(X,Y) = |(X \bigtriangleup Y) \cap A| + \sum_{n \in X \bigtriangleup Y} 2^{-n}
\]
is a complete metric on the ideal $\{ X \subseteq \omega : X \cap A \mbox{ is finite}\}$.

\begin{proof}[Proof of Theorem \ref{completeLCP}]
Instead of describing a wRK-complete locally compact Polishable ideal on $\omega$, the proof proceeds by defining such a wRK-complete ideal on an alternative countably infinite set, namely $2^{<\omega}$.  Specifically, let 
\[
B = \{ s {}^\smallfrown 1 : s \in 2^{<\omega}\}.
\]
and define
\[
I_* = \{ X \subseteq 2^{<\omega} : X \cap B \mbox{ is finite}\}.
\]
The ideal $I_*$ is locally compact and Polishable.  To see that $I_* $ is wRK-complete among such ideals, let $J$ be any locally compact Polishable ideal on $\omega$ and suppose $A \subseteq \omega$ is such that
\[
J = \{ X \subseteq \omega : X \cap A \mbox{ is finite}\}.
\]
Let $\alpha \in 2^\omega$ be the characteristic function of $A$ and define a map
\[
f: \{ \alpha \upharpoonright n : n \in \omega \} \rightarrow \omega
\]
by $f(\alpha \upharpoonright n) = n$.

{\em Claim.}  $f$ is a wRK-reduction of $J$ to $I_*$.

This follows from the observation that, for $X \subseteq \omega$,
\[
f^{-1} [X] = \{ \alpha \upharpoonright n : n \in X\}
\]
Therefore, $X \cap A$ is finite iff $f^{-1} [X] \cap \{ \alpha \upharpoonright n : \alpha (n) = 1\}$ is finite since $\alpha$ is the characteristic function of $A$.  In other words, $X \in J$ iff $f^{-1} [X] \cap B$ is finite, i.e., iff $f^{-1} [X] \in I_*$.
\end{proof}

\section{Analytic P-ideals}\label{pideals}

The next theorem establishes the existence of a wRK-complete analytic P-ideal.

\begin{theorem}\label{completeP}
There is an analytic P-ideal $I_{\rm max}$ such that $J \leq_{\rm wRK} I_{\rm max}$ for each analytic P-ideal $J$.
\end{theorem}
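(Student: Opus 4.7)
The plan is to mimic the proof of Theorem~\ref{completeLCP}, using Solecki's characterization of analytic P-ideals as exhaustive ideals of lower semicontinuous (lsc) submeasures: every analytic P-ideal $J$ is of the form $\mathrm{Exh}(\varphi) = \{X \subseteq \omega : \lim_n \varphi(X \setminus n) = 0\}$ for some lsc submeasure $\varphi : \mathcal{P}(\omega) \to [0,1]$ (after normalization). The goal is to build a single lsc submeasure $\Phi^*$ on the countable set $2^{<\omega}$ whose exhaustive ideal $I_{\max} = \mathrm{Exh}(\Phi^*)$ is wRK-above every such $\mathrm{Exh}(\varphi)$.

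First I would parametrize the lsc submeasures bounded by~$1$ continuously by $\sigma \in 2^\omega$, writing $\sigma \mapsto \varphi_\sigma$. For each $\sigma$, the branch $B_\sigma = \{\sigma \upharpoonright n : n \geq 1\} \subseteq 2^{<\omega}$, together with the bijection $f_\sigma : B_\sigma \to \omega$ given by $f_\sigma(\sigma \upharpoonright n) = n$, will play the role that $\{\alpha \upharpoonright n : n \in \omega\}$ played in the proof of Theorem~\ref{completeLCP}. I would then define the universal submeasure by amalgamating the $\varphi_\sigma$ along the branches:
\[
\Phi^*(Y) \;=\; \sup_{\sigma \in 2^\omega} \varphi_\sigma\bigl( f_\sigma[Y \cap B_\sigma] \bigr).
\]
Monotonicity, subadditivity, and the bound $\Phi^* \leq 1$ are inherited from the $\varphi_\sigma$; lower semicontinuity follows, with some care, from continuity of the parametrization. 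Solecki's characterization then delivers that $I_{\max} = \mathrm{Exh}(\Phi^*)$ is an analytic P-ideal.

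To see that $J = \mathrm{Exh}(\varphi_{\sigma_0}) \leq_{\mathrm{wRK}} I_{\max}$ via $f_{\sigma_0}$, one compares $\Phi^*(f_{\sigma_0}^{-1}[X] \setminus F_N)$ with $\varphi_{\sigma_0}(X \setminus N)$ as $N \to \infty$, where $F_N = \{s \in 2^{<\omega} : |s| \leq N\}$. The ``on-branch'' contribution (taking $\sigma = \sigma_0$ in the sup) is exactly $\varphi_{\sigma_0}(X \setminus N)$, so $X \in J$ drives the on-branch contribution to $0$. The ``off-branch'' contributions come from $\sigma \neq \sigma_0$, whose branches meet $B_{\sigma_0}$ only in the finite overlap determined by the splitting point $m_{\sigma,\sigma_0}$; each such $\sigma$ contributes $\varphi_\sigma(X \cap (N, m_{\sigma,\sigma_0}])$.

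The main obstacle is bounding these off-branch contributions uniformly in $\sigma$: a priori the sup over $\sigma \neq \sigma_0$ can exceed $\varphi_{\sigma_0}(X \setminus N)$, since nearby $\sigma$'s may give submeasures pointwise close to $\varphi_{\sigma_0}$ but not uniformly so on infinite sets. My approach is to choose the parametrization \emph{coherently}: arrange that whenever $\sigma \upharpoonright n = \sigma_0 \upharpoonright n$, the submeasures $\varphi_\sigma$ and $\varphi_{\sigma_0}$ agree on every subset of $\{0, \ldots, n-1\}$. Under such a parametrization each off-branch contribution $\varphi_\sigma(X \cap (N, m_{\sigma,\sigma_0}])$ collapses to $\varphi_{\sigma_0}(X \cap (N, m_{\sigma,\sigma_0}]) \leq \varphi_{\sigma_0}(X \setminus N)$, yielding $\Phi^*(f_{\sigma_0}^{-1}[X] \setminus F_N) = \varphi_{\sigma_0}(X \setminus N)$, and the wRK-equivalence follows. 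Arranging such a coherent parametrization that realises \emph{every} analytic P-ideal is the technical core of the argument; the plan is to work with a countable wRK-dense family of rational-valued lsc submeasures (to which every analytic P-ideal is Exh-equivalent) and spread the code over the levels of $2^{<\omega}$ at a sufficient rate, so that the first $n$ bits of $\sigma$ control $\varphi_\sigma$ on the first $n$ coordinates while still leaving enough freedom at deeper levels for all submeasures in the family to be hit.
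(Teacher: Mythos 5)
Your strategy is essentially the paper's: pass to rational-valued lsc submeasures via an Exh-equivalence lemma (Lemma~\ref{p-ideals lemma 2} in the paper), amalgamate a \emph{coherent} family of finite submeasures along branches of a tree, take the resulting exhaustive ideal as $I_{\rm max}$, and use the coherence to argue that the off-branch contributions to the supremum do not exceed the on-branch one, so that the branch map $\sigma\restrict(n+1)\mapsto n$ is a wRK-reduction. The three displayed properties of the paper's family $\{\phi_s\}$ are precisely your ``coherent parametrization,'' your formula for $\Phi^*$ is the paper's $\tilde\phi$, and your $(\dagger)$-style collapse of the off-branch terms is the paper's observation $(\ddagger)$.

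The gap is exactly where you flagged it, and it is not removable on $2^{<\omega}$. Your coherence condition makes $\varphi_\sigma\restrict\mathcal P(n)$ a function of $\sigma\restrict n$ (or of $\sigma\restrict m(n)$ if you thin the coded levels), so at most $2^n$ (respectively $2^{m(n)}$) finite restrictions are realized at each level. But the tree of finite rational-valued submeasures under end-extension is infinitely branching: for each submeasure $\rho$ on $\mathcal P(n)$ there are \emph{countably infinitely many} rational-valued submeasures on $\mathcal P(n+1)$ extending it. A finitely-branching coherent parametrization therefore cannot realize all of them, no matter how the coding is spread over the levels; equivalently, the space of rational-valued submeasures on $[\omega]^{<\omega}$ (discrete-valued, product topology) is not compact and so is not a continuous image of $2^\omega$ compatible with your level-by-level coherence. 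The fallback you mention -- a countable family hitting every analytic P-ideal up to Exh-equivalence -- also does not exist, since there are continuum-many pairwise non-isomorphic analytic P-ideals (e.g.\ density ideals). The paper dodges all of this by taking the underlying countable set to be $\omega^{<\omega}$: it attaches a rational-valued submeasure $\phi_s$ to the set $R_s$ of proper nonempty initial segments of each $s\in\omega^{<\omega}$, and its third stipulation on the family says that \emph{every} one-step rational-valued extension of $\phi_s$ appears as $\phi_{s{}^\smallfrown i}$ for some $i\in\omega$, using the $\omega$-branching to absorb the infinite choice at each level. Once you replace $2^{<\omega}$ by $\omega^{<\omega}$ and phrase your coherence conditions that way, the rest of your outline -- the sup formula for $\Phi^*$, the lower semicontinuous extension by sups over finite sets (the paper's Lemma~\ref{p-ideals lemma 1}), the claim that $\Phi^*(f_\sigma^{-1}[X])$ equals the target value, and hence the wRK-reduction -- goes through exactly as you describe and matches the paper's argument.
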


The proof of this theorem relies heavily on Solecki's characterization of analytic P-ideals in \cite{solecki.ideals}.  To utilize Solecki's result, it is necessary to first understand submeasures on $\omega$.

\begin{definition}
If $\mathcal A$ is a subalgebra of $\mathcal P (B)$ which contains the finite sets, a function $\phi : \mathcal A \rightarrow \mathbb R \cup \{ \infty \}$ is a {\em submeasure} iff
\begin{itemize}
	\item $\phi ( \emptyset ) = 0$,
	\item $\phi$ is {\em subadditive}, i.e., $\phi (X \cup Y) \leq \phi (X) + \phi (Y)$ and
	\item $\phi$ is {\em monotone}, i.e., $X \subseteq Y \implies \phi (X) \leq \phi (Y)$.
\end{itemize}
Such a map $\phi$ is {\em lower semicontinuous} (abbreviated {\em lsc})  iff,
\[
\lim_n \phi (X \cap n) = \phi (X)
\]
for each $X$ in the domain of $\phi$.
\end{definition}

\begin{remark*}
The definition above is not the most general definition of lower semicontinuity for functions on the Cantor space, but is equivalent to the general definition in the case of submeasures.
\end{remark*}

\begin{definition}
If $\phi$ is a lsc submeasure on $\mathcal P (B)$ for some countable set $B$, the {\em exhaustive ideal} of $\phi$ (written ${\rm Exh} (\phi)$) is the ideal consisting of those sets $X \subseteq B$ such that, for each $\varepsilon > 0$, there exists a finite set $F \subseteq B$ with $\phi ( X \setminus F ) < \varepsilon$.
\end{definition}

Given an lsc submeasure $\phi$, every ideal of the form ${\rm Exh} (\phi)$ is an analytic P-ideal.  In fact, ${\rm Exh} (\phi)$ is always $F_{\sigma \delta}$.  A powerful theorem of Solecki \cite[Theorem 3.1]{solecki.ideals} states that the converse is true as well: every analytic P-ideal is of the form ${\rm Exh} (\phi)$ for some lsc submeasure $\phi : \mathcal P (\omega) \rightarrow \mathbb R$.  In light of this, the proof of Theorem~\ref{completeP} proceeds by defining a ``universal'' lsc submeasure which encodes all possible lsc submeasures on $\omega$.

Before proving Theorem~\ref{completeP}, two lemmas are necessary.  The following lemma gives a way of extending a submeasure on finite sets to a lsc submeasure.  It is a standard result.

\begin{lemma}\label{p-ideals lemma 1}
If $\tilde \pi : [\omega]^{<\omega} \rightarrow \mathbb R \cup \{ \infty \}$ is a submeasure, the map $\pi : 2^\omega \rightarrow \mathbb R \cup \{ \infty \}$ defined by 
\[
\pi (X) = \sup \{ \tilde \pi (F) : F \subseteq X \mbox{ is finite} \} 
\]
is a lsc submeasure.
\end{lemma}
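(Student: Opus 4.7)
The plan is to verify, in turn, the three submeasure axioms (normalization at $\emptyset$, monotonicity, and subadditivity) and then the lower semicontinuity condition. Each check is routine, using only the corresponding property of $\tilde\pi$ and the definition of $\pi$ as a supremum over finite subsets.

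First I would note that $\pi(\emptyset) = 0$ trivially, since the only finite subset of $\emptyset$ is $\emptyset$ itself and $\tilde\pi(\emptyset) = 0$. Monotonicity is immediate: if $X \subseteq Y$, then every finite $F \subseteq X$ is also a finite subset of $Y$, so the supremum defining $\pi(X)$ is taken over a subfamily of the one defining $\pi(Y)$. For subadditivity, fix $X, Y \subseteq \omega$ and any finite $F \subseteq X \cup Y$. Write $F = (F \cap X) \cup (F \cap Y)$; both pieces are finite, so by subadditivity of $\tilde\pi$,
\[
\tilde\pi(F) \leq \tilde\pi(F \cap X) + \tilde\pi(F \cap Y) \leq \pi(X) + \pi(Y).
\]
Taking the supremum over $F$ gives $\pi(X \cup Y) \leq \pi(X) + \pi(Y)$.

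Finally, for lower semicontinuity, monotonicity already yields $\pi(X \cap n) \leq \pi(X)$ for each $n$, and this sequence is nondecreasing in $n$, so $\lim_n \pi(X \cap n) \leq \pi(X)$. For the reverse inequality, let $F \subseteq X$ be any finite set. Since $F$ is finite, there is some $n$ with $F \subseteq X \cap n$, so $\tilde\pi(F) \leq \pi(X \cap n) \leq \lim_n \pi(X \cap n)$. Taking the supremum over all such $F$ gives $\pi(X) \leq \lim_n \pi(X \cap n)$.

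There is no real obstacle here; the lemma is a standard bookkeeping exercise. The one subtlety worth mentioning explicitly is that $\pi$ is allowed to take the value $\infty$, so the subadditivity inequality must be interpreted in $\mathbb{R} \cup \{\infty\}$ in the obvious way (with $a + \infty = \infty$), and similarly the limit in the lsc condition may equal $\infty$, in which case the equality $\lim_n \pi(X \cap n) = \pi(X)$ still holds by the same supremum argument.
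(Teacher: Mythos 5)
Your proof is correct and follows essentially the same route as the paper: unwind the supremum definition of $\pi$ to verify monotonicity and subadditivity using the corresponding properties of $\tilde\pi$, then observe that any finite $F \subseteq X$ is contained in some $X \cap n$ to get lower semicontinuity. Your supremum-based phrasing of the lsc step handles the case $\pi(X) = \infty$ slightly more cleanly than the paper's $\varepsilon$-argument, but it is the same underlying idea.
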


\begin{proof}
Let $\tilde \pi$ and $\pi$ be as in the statement of the lemma.

{\em Claim.}  $\pi$ is a submeasure.

To check monotonicity, suppose $X \subseteq Y \subseteq \omega$.  By definition,
\[
\pi (X) = \sup \{ \tilde \pi (F) : F \subseteq X \mbox{ is finite} \}  \leq \sup \{ \tilde \pi (F) : F \subseteq Y \mbox{ is finite} \}  = \pi (Y)
\]
To verify subadditivity, suppose that $X , Y \subseteq \omega$.
\begin{align*}
\pi (X \cup Y)
&= \sup \{ \tilde \pi (F) : F \subseteq X \cup Y \mbox{ is finite}\}\\
&\leq \sup \{ \tilde \pi (F \cap X ) + \tilde \pi (F \cap Y) : F \subseteq X \cup Y \mbox{ is finite} \}\\
&\leq \sup \{ \tilde \pi (F )  : F \subseteq X \mbox{ is finite} \} +  \sup \{ \tilde \pi (F ) : F \subseteq  Y \mbox{ is finite} \}\\
&= \pi (X) + \pi (Y)
\end{align*}

{\em Claim.}  $\pi$ is lsc.

The goal is to show that $\lim_n \pi (X \cap n) = \pi (X)$ for each $X \subseteq \omega$.  Fix $\varepsilon > 0$ and suppose $F \subseteq X$ with
\[
\pi (X) - \varepsilon < \tilde \pi (F) \leq \pi (X).
\]
Thus, for any $n > \max (F)$, 
\[
\pi (X) - \varepsilon < \tilde \pi (F) \leq \tilde \pi (X \cap n) = \pi (X \cap n) \leq \pi (X)
\]
Since $\varepsilon $ was arbitrary, this proves the claim.
\end{proof}

The next lemma is implicit the proof of Theorem 2.1 in Solecki \cite{solecki.ideals}.  

\begin{lemma}\label{p-ideals lemma 2}
Given any lsc submeasure $\psi : 2^\omega \rightarrow \mathbb R \cup \{ \infty \}$ there is a lsc submeasure $\pi : 2^\omega \rightarrow \mathbb R \cup \{ \infty \}$ such that 
\begin{itemize}
	\item ${\rm Exh} (\pi) = {\rm Exh} (\psi)$ and 
	\item $\pi (F) \in \mathbb Q$ for each $F \in [\omega]^{<\omega}$.
\end{itemize}
\end{lemma}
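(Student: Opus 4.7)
The plan is to construct $\pi$ as the supremum of a sequence of rational-valued lsc submeasures obtained from $\psi$ by truncation of the input and dyadic rounding of the output.  Writing $\lceil r \rceil_\delta$ for the least non-negative multiple of $\delta$ that is at least $r$, I would set
\[
\pi_n(X) = \lceil \psi(X \cap n) \rceil_{2^{-n}}
\]
for each $n \in \omega$, and then define $\pi = \sup_n \pi_n$.  Each $\pi_n$ is a rational-valued lsc submeasure: subadditivity follows from the elementary inequality $\lceil a+b \rceil_\delta \leq \lceil a \rceil_\delta + \lceil b \rceil_\delta$, monotonicity from that of $\psi$, and lsc (in fact continuity) from the fact that $\pi_n$ depends on only finitely many coordinates of $X$.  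Hence $\pi$ is an lsc submeasure as a pointwise supremum of such.

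To check that $\pi(F) \in \mathbb{Q}$ for every finite $F \subseteq \omega$, the key observation is that for $n > \max F$ one has $\pi_n(F) = \lceil \psi(F) \rceil_{2^{-n}}$, and this is non-increasing in $n$, since every multiple of $2^{-n}$ is also a multiple of $2^{-(n+1)}$.  Consequently, the supremum over $n > \max F$ is attained at $n = \max F + 1$, and
\[
\pi(F) = \max\{\pi_n(F) : n \leq \max F + 1\}
\]
is a maximum of finitely many non-negative rationals.

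To verify $\text{Exh}(\pi) = \text{Exh}(\psi)$, the bound $\pi(X) \geq \sup_n \psi(X \cap n) = \psi(X)$ (using lsc of $\psi$) gives $\text{Exh}(\pi) \subseteq \text{Exh}(\psi)$.  For the reverse, observe that $\pi_m(X \setminus n) = 0$ whenever $m \leq n$ (since $(X \setminus n) \cap m = \emptyset$), while $\pi_m(X \setminus n) \leq \psi(X \setminus n) + 2^{-m} \leq \psi(X \setminus n) + 2^{-n}$ when $m > n$; hence
\[
\pi(X \setminus n) \leq \psi(X \setminus n) + 2^{-n},
\]
which tends to $0$ whenever $X \in \text{Exh}(\psi)$.

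The main obstacle is the rationality verification: a priori a supremum of rationals need not be rational, but the nesting of dyadic scales together with the $X \cap n$ truncation forces only finitely many $\pi_n(F)$ to be relevant on any given finite $F$.  (Throughout I am implicitly assuming $\psi$ is finite on finite sets, which holds for the lsc submeasures arising from analytic P-ideals in the proof of Theorem~\ref{completeP}.)
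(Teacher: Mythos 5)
Your proof is correct, and it takes a genuinely different route from the paper's. The paper (following Solecki's Theorem 2.1) rounds $\psi$ \emph{up to the nearest power of $2$}, which produces a rational-valued monotone function $\pi_1$ that is no longer subadditive, and then repairs subadditivity by passing to the ``covering infimum'' $\pi_2(F) = \inf\{\pi_1(F_0)+\cdots+\pi_1(F_k) : F_0\cup\cdots\cup F_k \supseteq F\}$; this yields a two-sided multiplicative bound $\frac{1}{2}\pi_2(F) < \psi(F) \le \pi_2(F)$, which controls $\mathrm{Exh}$. You instead round to a \emph{dyadic grid} $\lceil\,\cdot\,\rceil_{2^{-n}}$, which is numerically subadditive ($\lceil a+b\rceil_\delta \le \lceil a\rceil_\delta + \lceil b\rceil_\delta$), so each $\pi_n$ is already a submeasure with no covering step needed. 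The new idea in your version is tying the grid mesh $2^{-n}$ to the truncation $X\cap n$: coarseness at small $n$ keeps only finitely many $\pi_n(F)$ in play on any finite set (giving rationality via a finite max), while fineness at large $n$ gives the additive tail bound $\pi(X\setminus n) \le \psi(X\setminus n) + 2^{-n}$ that replaces Solecki's multiplicative bound in the $\mathrm{Exh}$ argument. What the paper's approach buys is the cleaner intrinsic estimate $\pi_2(F) \ge \psi(F) > \frac{1}{2}\pi_2(F)$ on every finite set, which is the form Solecki actually needs elsewhere; what yours buys is avoiding the covering-infimum construction entirely and getting lower semicontinuity for free, since each $\pi_n$ depends on only finitely many coordinates. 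Your caveat at the end is apt but not a real gap: the paper's own $\pi_1$ is likewise rational-valued only under the (standing, and in context harmless) assumption that $\psi$ is finite on finite sets.
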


\begin{proof}
Let $\psi$ be any lsc submeasure on $2^\omega$.  Following the proof of Theorem 2.1 in Solecki \cite{solecki.ideals}, define
\[
\pi_1 (F) = \inf \{ 2^{-n} : \psi (F) \leq 2^{-n}\}
\]
for finite $F \subseteq \omega$.  Note that $\pi_1$ is $\mathbb Q$-valued and monotone, but may not be subadditive.  Also notice that, if $\pi_1 (F) = 2^{-n}$, then $2^{-n} \geq \psi (F) > 2^{-n-1}$, i.e., 
\[
\textstyle \pi_1 (F) \geq \psi (F) > \frac{1}{2} \pi_1 (F).
\]
Again for finite $F \subseteq \omega$, let
\[
\pi_2 (F) = \inf \left\{ \pi_1 (F_0) + \ldots + \pi_1 (F_k) : F_0 , \ldots , F_k \mbox{ are finite and cover } F\right\}.
\]
Since $\pi_1$ is monotone, it does not change the values of $\pi_2$ to assume that $F = F_0 \cup \ldots \cup F_k$ in the infimum above.  It is also safe to assume that each $F_i$ is nonempty.  Thus, the infimum above is over a finite set and, in particular, is always rational since $\pi_1$ takes only rational values.

{\em Claim.}  $\pi_2$ is a submeasure on $[\omega]^{<\omega}$.

Monotonicity follows from the fact that, if $F \subseteq G$ are finite sets, any cover of $G$ is also a cover for $F$.  Hence, the infimum which gives $\pi_2 (F)$ is over a larger set than the infimum which gives $\pi_2 (G)$ and so $\pi_2 (F) \leq \pi_2 (G)$.  

To check subadditivity, suppose that $F , G \in [\omega]^{<\omega}$ with  $F = F_0 \cup \ldots \cup F_k$
and $G = G_0 \cup \ldots \cup G_n$ such that
\[
\pi_2 (F) = \pi_1 (F_0) + \ldots + \pi_1 (F_k) \qquad \& \qquad \pi_2 (G) = \pi_1 (G_0) + \ldots + \pi_1 (G_n)
\]
Since
\[
F \cup G \subseteq F_0 \cup \ldots \cup F_k \cup G_0 \cup \ldots \cup G_n
\]
it follows that 
\[
\pi_2 (F \cup G) \leq \pi_1 (F_0) + \ldots + \pi_1 (F_k) + \pi_1 (G_0) + \ldots + \pi_1 (G_n) = \pi_2 (F) + \pi_2 (G).
\]
This proves that $\pi_2$ is subadditive and completes the claim.

\vspace{1em}

As in Solecki's argument, the following claim is the key to the proof.

{\em Claim.}  $\textstyle \pi_2 (F) \geq \psi (F) > \frac{1}{2} \pi_2 (F)$ for each finite set $F \subseteq \omega$.

To see this, suppose that $F_0 , \ldots , F_k$ are such that $F = F_0 \cup \ldots \cup F_k$ and $\pi_2 (F) = \pi_1 (F_0) + \ldots + \pi_1 (F_k)$.  Observe the following:
\begin{align*}
\pi_2 (F)
& = \pi_1 (F_0) + \ldots + \pi_1 (F_k) &\mbox{(by assumption)}\\
&\geq \psi (F_0) + \ldots + \psi (F_k) &\mbox{(by the definition of $\pi_1$)}\\
&\geq \psi (F) &\mbox{(by the subadditivity of $\psi$)}\\
&> \textstyle \frac{1}{2} \pi_1 (F) &\mbox{(again by the definition of $\pi_1$)}\\
&\geq \textstyle \frac{1}{2}\pi_2 (F) &\mbox{(since $\{ F \}$ covers $F$)}
\end{align*}
This establishes the claim.

\vspace{1em}

Now let
\[
\pi (X) = \sup \{ \pi_2 (F) : F \subseteq X \mbox{ is finite}\}.
\]
By Lemma \ref{p-ideals lemma 1}, $\pi$ is a lsc submeasure.  Also, $\pi$ agrees with $\pi_2$ on finite sets by the monotonicity of $\pi_2$.

{\em Claim.}  ${\rm Exh} (\pi) = {\rm Exh} (\psi)$

Suppose $X \in {\rm Exh} (\psi)$.  Fix $\varepsilon > 0$ and let $n \in \omega$ be such that $\psi ( X \setminus n) < \varepsilon / 2$.  Let $F \subseteq X \setminus n$ be finite.  By the previous claim, $\pi_2 (F) \leq 2 \psi (F) < \varepsilon$.  Since $F$ was arbitrary, this shows that $\pi (X \setminus n) \leq \varepsilon$.  As $\varepsilon$ was arbitrary, $X \in {\rm Exh} (\pi)$.

Reverse the roles of $\psi$ and $\pi$ and use the inequality $\psi (F) \leq \pi_2 (F)$ from the previous claim to show that ${\rm Exh} (\pi) \subseteq {\rm Exh} (\psi)$.  This completes the proof of the lemma.
\end{proof}

\begin{proof}[Proof of Theorem~\ref{completeP}]

The objective of the proof is to define an analytic P-ideal which is wRK-complete among analytic P-ideals.  In light of Solecki's characterization of such ideals, this wRK-complete ideal will be defined as the exhaustive ideal of a submeasure $\phi$.

In what follows, let $R_s \subseteq \omega^{<\omega}$ denote the interval consisting of all initial segments -- other than the empty string -- of a string $s \in \omega^{<\omega}$ with $|s| \geq 1$.  In other words, for $s$ of length $k$, 
\[
R_s = \{ \langle s(0) \rangle \, , \, \langle s(0) , s(1) \rangle \, , \, \ldots \, , \, \langle s(0) , s(1) , \ldots , s(k-1)\rangle \}.
\]
In particular, $R_s$ has cardinality $|s| - 1$.  In what follows, $R_s$ is identified with the interval $|s| = \{ 0 , \ldots , |s|-1\} \subseteq \omega$ via the map $(s \upharpoonright k) \mapsto (k-1)$.

For each $s \in \omega^{<\omega}$ with $|s| \geq 1$, choose a submeasure $\phi_s : \mathcal P (R_s) \rightarrow \mathbb Q$ such that the following hold.
\begin{itemize}
	\item For $s \in \omega^{<\omega}$ and $i \in \omega$, the submeasure $\phi_{s {}^\smallfrown i}$ agrees with $\phi_s$ on $\mathcal P (R_s)$;

	\item For each $q \in \mathbb Q$, there exists $i \in \omega$ such that $\phi_{\langle i \rangle} (\{ \langle i \rangle \}) = q$; 
	
	\item Supposing $s \in \omega^{<\omega}$ with $|s| = n$, if $\rho : \mathcal P ( n ) \rightarrow \mathbb Q$ is a submeasure such that $\rho \upharpoonright \mathcal P ( n-1 )$ agrees with $\phi_s$ via the above described identification between $n-1$ and $R_s$, there exists $i \in \omega$ such that $\rho$ agrees with $\phi_{s {}^\smallfrown i}$ via the identification between $n$ and $R_{s {}^\smallfrown i}$.
	
\end{itemize}
In other words, the third condition above implies that any $\mathbb Q$-valued submeasure on a finite set $A$ is ``coded'' by some $\phi_s$ with $|s| = |A| + 1$.  Also, by the first property of the $\phi_s$,
\begin{equation}\label{eq1}\tag{$\dagger$}
F \subseteq R_s \cap R_t \implies \phi_s (F) = \phi_t (F).
\end{equation}
for $s,t \in \omega^{<\omega}$.  The next step is to define a submeasure $\tilde \phi : [\omega^{<\omega}]^{<\omega} \rightarrow \mathbb Q$ which combines all of the $\phi_s$.  Given a finite set $F \subseteq \omega^{<\omega}$, let
\[
\tilde \phi (F) = \sup \{ \phi_s (F \cap R_s) : s \in \omega^{<\omega}\}.
\]
It follows from \ref{eq1} that this supremum need only be taken over those $s \in \omega^{<\omega}$ with $s \preceq t$ for some $t \in F$.  In particular, $\tilde \phi (F)$ is always well-defined and rational.  Moreover, for each $F \in [\omega^{<\omega}]^{<\omega}$, there exists $s \in \omega^{<\omega}$ such that $\tilde \phi (F) = \phi_s (F \cap R_s)$.

{\em Claim.}  The map $\tilde \phi : [\omega^{<\omega}]^{<\omega} \rightarrow \mathbb Q$ is a submeasure.

To verify monotonicity, suppose that $F \subseteq G \subseteq \omega^{<\omega}$ are finite sets.  Let $s \in \omega^{<\omega}$ be such that $\tilde \phi (F) = \phi_s (F \cap R_s)$.  It follows that
\[
\tilde \phi (F) = \phi_s (F \cap R_s) \leq \phi_s (G \cap R_s) \leq \tilde \phi (G).
\]
The first inequality above derives from the fact that $\phi_s$ is a submeasure.

To establish the subadditivity of $\tilde \phi$, fix finite sets $F , G \subseteq \omega^{<\omega}$ and let $s \in \omega^{<\omega}$ be such that $\tilde \phi (F \cup G) = \phi_s ((F \cup G) \cap R_s)$.  Thus,
\begin{align*}
\tilde \phi (F \cup G)
&= \phi_s ((F \cup G) \cap R_s)\\
&\leq \phi_s (F \cap R_s) + \phi_s (G \cap R_s)\\
&\leq \tilde \phi (F) + \tilde \phi (G)
\end{align*}
This completes the proof of the claim.

\vspace{1em}

Now let $\phi_{\rm max} : \mathcal P (\omega^{<\omega}) \rightarrow \mathbb R$ be given by
\[
\phi_{\rm max} (X) = \sup \{ \tilde \phi (F) : F \subseteq X \mbox{ is finite}\}.
\]
By Lemma \ref{p-ideals lemma 1}, $\phi_{\rm max}$ is itself a lsc submeasure.  The key property of ${\rm Exh} (\phi_{\rm max})$ is that, for $X  \subseteq \omega^{<\omega}$ and $\alpha \in \omega^\omega$,
\begin{equation}\label{eq2}\tag{$\ddagger$}
X \subseteq \bigcup_n R_{\alpha \upharpoonright n} \implies \phi_{\rm max} (X) = \lim_n \phi_{\alpha \upharpoonright n} (X \cap R_{\alpha \upharpoonright n}).
\end{equation}
This follows from the observation \ref{eq1} above.  The next claim will complete the proof.

{\em Claim.}  If $I$ is any P-ideal on $\omega$, then $I \leq_{\rm wRK} {\rm Exh} (\phi_{\rm max})$.

Given a P-ideal $I$ on $\omega$, it follows from Solecki \cite[Theorem 3.1]{solecki.ideals} that there is a lsc submeasure $\pi$ such that $I = {\rm Exh} (\pi)$.  By Lemma~\ref{p-ideals lemma 2}, it is no loss of generality to assume that $\pi (F) \in \mathbb Q$ for each finite set $F \subseteq \omega$.  By repeated application of the third property of the $\phi_s$, there is an $\alpha \in \omega^\omega$ such that
\[
\pi (F) = \phi_{\alpha \upharpoonright n} (\{ \alpha \upharpoonright (k+1) : k \in F\})
\]
for each finite $F \subseteq \omega$ and $n > \max(F) + 1$.  Define $f : \bigcup_n R_{\alpha \upharpoonright n} \rightarrow \omega$ by 
\[
f(\alpha \upharpoonright (n+1)) = n.
\]
To see that $f$ is a wRK-reduction of $I = {\rm Exh} (\pi)$ to ${\rm Exh} (\phi_{\rm max})$, first observe that, for each $X \subseteq \omega$, 
\[
f^{-1} [X] = \{ \alpha \upharpoonright (n+1) : n \in X\}.
\]
It now follows from the choice of $\alpha$ and $\ddagger$ that
\[
\phi_{\rm max} (f^{-1} [X]) = \pi (X)
\]
for each $X \subseteq \omega$.  Thus,
\[
X \in {\rm Exh} (\pi) \iff f^{-1} [X] \in {\rm Exh} (\phi_{\rm max})
\]
for $X \subseteq \omega$.  In other words, $f$ is a wRK-reduction of ${\rm Exh} (\pi)$ to ${\rm Exh} (\phi_{\rm max})$.  This verifies the claim and shows that $I_{\rm max} = {\rm Exh} (\phi_{\rm max})$ is a wRK-complete analytic P-ideal, completing the proof.
\end{proof}


\begin{thebibliography}{1}

\bibitem{beros.wrk}
Konstantinos~A. Beros.
\newblock Weak {R}udin-{K}eisler reductions on projective ideals.
\newblock {\em Fundamenta Mathematicae}, 232(1):56--78, 2016.

\bibitem{causey.ellp}
Ryan~M. Causey.
\newblock Proximity to $\ell_p$ and $c_0$ in {B}anach spaces.
\newblock {\em Journal of Functional Analysis}, 269:3952--4005, 2015.

\bibitem{fremlin.tukey}
David~H. Fremlin.
\newblock The partially ordered sets of measure theory and {T}ukey's ordering.
\newblock {\em Note di Matematica}, 11:177--214, 1991.

\bibitem{isbell.cofinal_types}
John~R. Isbell.
\newblock Seven cofinal types.
\newblock {\em Journal of the London Mathematical Society}, 4(2):651--654,
  1972.

\bibitem{louveau-velickovic.ideals-cofinal-types}
Alain Louveau and Boban Velickovic.
\newblock Analytic ideals and cofinal types.
\newblock {\em Annals of Pure and Applied Logic}, 99:171--195, 1999.

\bibitem{solecki.ideals}
S{\l}awomir Solecki.
\newblock Analytic ideals and their applications.
\newblock {\em Annals of Pure and Applied Logic}, 99:51--72, 1999.

\bibitem{solecki-todorcevic.directed}
S{\l}awomir Solecki and Stevo Todorcevic.
\newblock Cofinal types of topological directed orders.
\newblock {\em Annales de l'Institut Fourier (Grenoble)}, 54(6):1877--1911,
  2005.

\bibitem{solecki-todorcevic.avoiding}
S{\l}awomir Solecki and Stevo Todorcevic.
\newblock Avoiding families and {T}ukey functions on the nowhere-dense ideal.
\newblock {\em Journal of the Institute of Mathematics Jussieu},
  10(2):405--435, 2011.

\bibitem{tukey}
John~W. Tukey.
\newblock {\em Convergence and Uniformity in Topology}.
\newblock Number~2 in Annals of Mathematics Studies. Princeton University
  Press, 1940.

\end{thebibliography}
\end{document}